\newtheorem{theorem}{Theorem}[section]
\newtheorem{proposition}[theorem]{Proposition}
\theoremstyle{definition}
\newtheorem{definition}[theorem]{Definition}
\newlength{\Oldarrayrulewidth}
\newcommand{\N}{\mathbb{N}}
\newcommand{\Z}{\mathbb{Z}}
\renewcommand{\gcd}{\textup{GCD}}
\renewcommand{\mod}[2]{\equiv#1\textup{ (mod }#2\textup{)}}
\def\m@th{\mathsurround=0pt}
\def\sm#1{\null\,\vcenter{\baselineskip9pt\lineskip.23ex\m@th
    \ialign{\hfil$\scriptstyle##$\hfil&&\ \hfil$\scriptstyle##$\hfil\crcr
    \mathstrut\crcr\noalign{\kern-\baselineskip}
    #1\crcr\mathstrut\crcr\noalign{\kern-\baselineskip}}}\,}
\def\smnp#1{\null\,\vcenter{\baselineskip9pt\lineskip.23ex\m@th
    \ialign{\hfil$\scriptstyle##$\hfil&&\ \ \hfil$\scriptstyle##$\hfil\crcr
    \mathstrut\crcr\noalign{\kern-\baselineskip}
    #1\crcr\mathstrut\crcr\noalign{\kern-\baselineskip}}}\,}
\begin{document}

\title{The Edge-Distinguishing Chromatic Number of Petal Graphs, Chorded Cycles, and Spider Graphs}
\author[1]{Grant Fickes\thanks{gfickes@email.sc.edu}}
\author[2]{Wing Hong Tony Wong\thanks{wong@kutztown.edu}}
\affil[1]{Department of Mathematics, University of South Carolina}
\affil[2]{Department of Mathematics, Kutztown University of Pennsylvania}
\date{\today}

\maketitle

\begin{abstract}
The edge-distinguishing chromatic number (EDCN) of a graph $G$ is the minimum positive integer $k$ such that there exists a vertex coloring $c:V(G)\to\{1,2,\dotsc,k\}$ whose induced edge labels $\{c(u),c(v)\}$ are distinct for all edges $uv$. Previous work has determined the EDCN of paths, cycles, and spider graphs with three legs. In this paper, we determine the EDCN of petal graphs with two petals and a loop, cycles with one chord, and spider graphs with four legs. These are achieved by graph embedding into looped complete graphs.\\
\textit{MSC:} 05C15, 05C60, 05C45\\
\textit{Keywords:} Vertex coloring, edge-distinguishing, graph embedding, EDCN, caterpillars, petal graphs, chorded cycles, spiders.
\end{abstract}


\section{Introduction}\label{sec:intro}

Let the graph $G$ be composed of a simple graph together with at most one loop at each vertex, and let $V(G)$ and $E(G)$ be the vertex set and the edge set of $G$ respectively. Let $[k]=\{1,2,\dotsc,k\}$ denote a set of $k$ colors, and let $\scalebox{0.8}{$\dbinom{[k]}{i}$}$ denote the set of $i$-subsets of $[k]$. For every vertex coloring $c:V(G)\to[k]$, there is an induced edge coloring $c':E(G)\to\scalebox{0.8}{$\dbinom{[k]}{1}$}\cup\scalebox{0.8}{$\dbinom{[k]}{2}$}$, defined by $c'(uv)=\{c(u),c(v)\}$ for each edge $uv$. A vertex coloring $c$ is considered \emph{edge-distinguishing} if $c'$ is injective. The \emph{edge-distinguishing chromatic number (EDCN)} of $G$, denoted by $\lambda(G)$, is the minimum integer $k$ such that an edge-distinguishing vertex coloring with $k$ colors exists for $G$.

The concept of coloring certain elements of a graph $G$ to distinuguish another set of elements associated with $G$ has many variations. For instance, the problem of vertex-distinguishing edge coloring has been studied in the literature \cite{att,bbs,bhlw}, and the problem of distinguishing graph automorphisms through vertex coloring has also been studied \cite{ct,tymoczko}.

This notion of edge-distinguishing vertex coloring, also called a \emph{line-distinguishing vertex coloring}, was first introduced by Frank et al.\ \cite{fhp}. This problem was further studied by Al-Wahabi et al.\ \cite{abhu}, Zagaglia Salvi \cite{salvi}, Fickes and Wong \cite{fw}, etc. Brunton et al.\ \cite{bwg} considered the case when the edge coloring function $c'$ is also surjective. It is worth mentioning that the more popular notion of harmonious chromatic number typically refers to a slight variation of the EDCN \cite{mitchem}. More specifically, if $G$ is a simple graph, then the \emph{harmonious chromatic number} of $G$, denoted by $h(G)$, is the minimum integer $k$ such that an edge-distinguishing proper vertex coloring with $k$ colors exists for $G$. It is obvious that $\lambda(G)\leq h(G)$.

Determining $\lambda(G)$ for a general graph $G$ is NP-complete \cite{hk}. Consequently, most work in the literature focuses on providing bounds on $h(G)$ and thus $\lambda(G)$ \cite{aaeejr} or studying the asymptotic behavior of $h(G)$ \cite{bbw} for various families of graphs $G$. When trying to determine the exact formula for $\lambda(G)$ or $h(G)$, only very limited families have been tackled. For example, $\lambda(G)$ is determined for paths and cycles \cite{abhu,fhp}, and $h(G)$ is determined for complete $r$-ary trees \cite{edwards}.

We are interested in determining the exact formula for the EDCN of other families of graphs; however, pushing the results from paths and cycles to other graphs seems formidable, and it has not been done since the 1980's until very recently. A path is a tree with maximum degree $2$, so a natural extension of a path is a \emph{spider graph}, which has a unique vertex with degree at least $3$, often called the \emph{central vertex}. Each path between the central vertex and a leaf (i.e., a degree $1$ vertex) is called a \emph{leg}. We denote a spider graph as $S_{\ell_1,\ell_2,\dotsc,\ell_\Delta}$, where $\Delta\geq3$ is the number of legs, and $1\leq\ell_1\leq\ell_2\leq\dotsc\leq\ell_\Delta$ represent the number of edges in each leg. Here are some published results on the EDCNs of paths, cycles, and spider graphs in the literature. 

\begin{theorem}[\cite{abhu,fhp}]
Let $G=P_n$ be a path with $n$ vertices. Then
$$\lambda(P_n)=\left\{\begin{array}{ll}
1& \text{if }n\leq2;\text{ and}\\
\min\left\{2\left\lceil\sqrt{\frac{n-2}{2}}\right\rceil,2\left\lceil\frac{1+\sqrt{8n-7}}{4}\right\rceil-1\right\}& \text{if }n\geq3.
\end{array}\right.$$
\end{theorem}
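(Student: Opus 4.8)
The plan is to recast edge-distinguishing colorings as trails in a looped complete graph and then reduce the problem to computing the length of the longest such trail. Write $K_k^+$ for the complete graph on the color set $[k]$ together with a loop at every vertex. A vertex coloring $c:V(P_n)\to[k]$ of the path $v_1v_2\cdots v_n$ is precisely the walk $c(v_1),c(v_2),\dotsc,c(v_n)$ of length $n-1$ in $K_k^+$, where a monochromatic step $c(v_i)=c(v_{i+1})$ traverses the loop at that color and a bichromatic step traverses the corresponding non-loop edge. The induced labels $\{c(v_i),c(v_{i+1})\}$ are exactly the edges used by this walk, so $c$ is edge-distinguishing if and only if the walk repeats no edge, i.e.\ it is a trail. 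Hence $\lambda(P_n)$ is the least $k$ for which $K_k^+$ contains a trail of length $n-1$.

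First I would record the degree sequence of $K_k^+$: each vertex meets $k-1$ non-loop edges and one loop, and since a loop contributes $2$ to the degree, every vertex has degree $(k-1)+2=k+1$; the total number of edges is $\binom{k}{2}+k=\binom{k+1}{2}$. The parity of $k+1$ now governs everything through Euler's theorem. When $k$ is odd every degree is even, so $K_k^+$ has a closed Eulerian trail and the longest trail has length $\binom{k+1}{2}$. When $k$ is even all $k$ vertices have odd degree; by the classical fact that the edge set of a connected graph with $2m$ odd-degree vertices decomposes into exactly $m$ trails (for $m\ge1$), no single trail can use more than $\binom{k+1}{2}-(m-1)$ edges, here with $m=k/2$. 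I would then show this bound is attained: deleting a matching of $k/2-1$ non-loop edges covering $k-2$ of the odd vertices leaves a connected graph with exactly two odd vertices, hence an open Eulerian trail of length $\binom{k+1}{2}-(k/2-1)=\tfrac{k^2}{2}+1$.

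With the maximum trail length in hand, the formula follows by inverting, in each parity, the condition that $n-1$ not exceed that maximum. For odd $k$ the condition $n-1\le\binom{k+1}{2}$ rearranges to $k^2+k-(2n-2)\ge0$, whose least odd solution is $2\bigl\lceil\tfrac{1+\sqrt{8n-7}}{4}\bigr\rceil-1$; for even $k$ the condition $n-1\le\tfrac{k^2}{2}+1$ rearranges to $k\ge\sqrt{2(n-2)}$, whose least even solution is $2\bigl\lceil\sqrt{\tfrac{n-2}{2}}\bigr\rceil$. Taking the smaller of these two candidate values of $k$ yields the stated minimum, and the degenerate cases $n\le2$ (a single vertex, or a single edge colored $(1,1)$) are checked by hand to give $\lambda(P_n)=1$.

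The step I expect to be the genuine obstacle is the even case, and in particular the sharp lower bound on the number of omitted edges. The naive counting bound $n-1\le\binom{k+1}{2}$ is not tight, so one must argue that the odd-degree obstruction really does force $k/2-1$ edges to be wasted—this is where the trail-decomposition theorem does the essential work—and then exhibit an explicit trail meeting the bound so that the lower and upper estimates coincide. Verifying that the matching to be deleted can always be chosen without disconnecting $K_k^+$ is routine given its density, but it is the point that must be handled with care.
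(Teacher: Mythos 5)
Your argument is correct, and it is essentially the approach this paper is built on: the identification of edge-distinguishing colorings of $P_n$ with trails in the looped complete graph is exactly Theorem \ref{embedding}, and your parity count for even $k$ (each odd-degree vertex forces a missed edge, except for the two trail endpoints) is the same counting the paper uses in Theorems \ref{petalembedevenk}, \ref{chordedembedevenk}, and \ref{spiderembedevenk}. The paper itself only cites this result from \cite{abhu,fhp} rather than reproving it, but your derivation matches the standard proof and the formula checks out in both parities.
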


\begin{theorem}[\cite{abhu,fhp}]
Let $G=C_n$ be a cycle with $n$ vertices, where $n\geq3$. Then
$$\lambda(C_n)=\min\left\{2\left\lceil\sqrt{\frac{n}{2}}\right\rceil,2\left\lceil\frac{1+\sqrt{8n+1}}{4}\right\rceil-1\right\}.$$
\end{theorem}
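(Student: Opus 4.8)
The plan is to recast the problem in terms of the \emph{looped complete graph} and then reduce it to a purely combinatorial question about closed trails. Let $K_k^+$ denote the graph on vertex set $[k]$ obtained from $K_k$ by attaching a loop at every vertex. First I would observe that an edge-distinguishing coloring $c:V(C_n)\to[k]$ is the same datum as a closed trail of length $n$ in $K_k^+$: writing the cycle as $v_1v_2\cdots v_nv_1$, the colors $c(v_1),\dots,c(v_n)$ form a cyclic sequence of vertices of $K_k^+$, its consecutive pairs $\{c(v_i),c(v_{i+1})\}$ are exactly the induced edge labels, and these labels are distinct precisely when the associated closed walk repeats no edge. Hence $\lambda(C_n)=\min\{k:K_k^+\text{ admits a closed trail of length }n\}$, and the theorem reduces to determining $M(k)$, the maximum length of a closed trail in $K_k^+$, together with the claim that every length in $\{3,\dots,M(k)\}$ is realized.

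Next I would bound $M(k)$. The edges of any closed trail span a connected subgraph of $K_k^+$ in which every vertex has even degree (a loop contributes $2$). Since $K_k^+$ has $\binom{k+1}{2}$ edges and every vertex has degree $k+1$, for odd $k$ all degrees are even, $K_k^+$ is Eulerian, and $M(k)=\binom{k+1}{2}=\tfrac{k(k+1)}{2}$. For even $k$ all $k$ vertices have odd degree; any even subgraph is obtained by deleting an edge set meeting each such vertex in an odd number of edges, and since loops alter no parity this forces the deletion of at least a perfect matching, i.e.\ at least $k/2$ edges, so $M(k)\le\binom{k+1}{2}-\tfrac{k}{2}=\tfrac{k^2}{2}$; deleting a single perfect matching leaves a connected Eulerian graph and attains this. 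Consequently $n>M(k)$ rules out a $k$-coloring, giving the lower bound on $\lambda(C_n)$.

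Conversely I must exhibit, for each $3\le n\le M(k)$, a closed trail of length exactly $n$; this is the crux. Short lengths $3\le n\le k$ are handled by a single $n$-cycle. A loop at any vertex already visited can be spliced in to raise the length by exactly $1$, and edge-disjoint cycles glued at a shared vertex raise it by larger increments while preserving connectedness and even degrees. The strategy is to realize a given $n$ as (the size of a connected even subgraph of $K_k$) plus (a number of loops not exceeding the number of vertices that subgraph touches): since up to $k$ loops are available they absorb all unit gaps, while the subgraph sizes sweep the full range up to $\binom{k}{2}$ (odd $k$) or $\binom{k}{2}-\tfrac{k}{2}$ (even $k$). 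The main obstacle is exactly this no-gaps claim — verifying that no integer of $[3,M(k)]$ is skipped, with the delicate region being the top of the range and, for even $k$, the need to avoid the deleted matching; the few small values of $k$ (in particular $k\le 2$, which never arise for $n\ge3$) are checked directly.

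Once it is established that a closed trail of length $n$ exists if and only if $3\le n\le M(k)$, we obtain $\lambda(C_n)=\min\{k:M(k)\ge n\}$. Because $M$ is strictly increasing, it suffices to compute the least even and least odd feasible $k$ and take the smaller. Writing $k=2m$ gives $M(k)=2m^2$, so $2m^2\ge n$ yields $k=2\lceil\sqrt{n/2}\rceil$; writing $k=2m-1$ gives $M(k)=2m^2-m$, so $2m^2-m\ge n$ yields $m\ge\tfrac{1+\sqrt{8n+1}}{4}$ and $k=2\lceil\tfrac{1+\sqrt{8n+1}}{4}\rceil-1$. Taking the minimum of these two expressions produces the claimed formula.
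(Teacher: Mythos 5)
The paper does not prove this theorem; it is quoted from \cite{abhu,fhp}, so there is no internal proof to compare against. Your overall architecture is the right one and is exactly the philosophy the paper itself uses for its new results: Theorem \ref{embedding} converts the problem into embedding $C_n$ in $K_k^*$, which for a cycle is precisely a closed trail of length $n$; your parity count showing that the maximum trail length is $\binom{k+1}{2}$ for odd $k$ and $\frac{k^2}{2}$ for even $k$ is correct (and mirrors the edge-counting arguments in Theorems \ref{petalembedevenk} and \ref{chordedembedevenk}); and the final arithmetic converting $\min\{k:M(k)\geq n\}$ into the stated formula, using that $M$ is strictly increasing, is correct.

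The genuine gap is the step you yourself flag as the crux: showing that \emph{every} integer $n$ with $3\leq n\leq M(k)$ is the length of some closed trail in $K_k^*$. All the content of the theorem lives there, and your sketch does not establish it. Worse, the intermediate assertion that connected even subgraph sizes ``sweep the full range up to $\binom{k}{2}$'' is false: for odd $k$ there is no even subgraph of $K_k$ with $\binom{k}{2}-1$ or $\binom{k}{2}-2$ edges, since deleting one edge, or any two distinct edges, from $K_k$ leaves some vertex of odd degree. The decomposition ``even subgraph plus at most one loop per visited vertex'' can still be made to work because the $k$ available loops absorb these gaps (e.g.\ $n=\binom{k}{2}-1$ is realized by deleting a triangle and adding two loops), but that requires an explicit case analysis near the top of the range for both parities of $k$ --- compare the careful constructions of $H_0$, $H_1$, $H_2$ in Theorem \ref{petalembedoddk} --- and small $k$ (notably $k=3,4,5$) must be checked by hand. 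Until that realizability argument is written out, the upper bound $\lambda(C_n)\leq\min\{k:M(k)\geq n\}$ is unproved and the formula could a priori fail for values of $n$ sitting in a hypothetical gap.
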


\begin{theorem}[\cite{fw}]\label{Sell1ell2ell3EDCN}
Let $S_{\ell_1,\ell_2,\ell_3}$ be a spider graph with $3$ legs, where $\ell_1+\ell_2+\ell_3=L$. Then
$$\lambda(S_{\ell_1,\ell_2,\ell_3})=\left\{\begin{array}{ll}
3& \text{if $3\leq L\leq 6$ and $\ell_1=1$;}\\
4& \text{if $L=6$ and $\ell_1=2$;}\\
\left\lceil\frac{-1+\sqrt{1+8L}}{2}\right\rceil& \text{if $L\geq7$ and $\left\lceil\frac{-1+\sqrt{1+8L}}{2}\right\rceil$ is odd; and}\\
\left\lceil\sqrt{2L-4}\right\rceil& \text{otherwise.}
\end{array}
\right.$$
\end{theorem}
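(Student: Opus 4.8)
The plan is to reinterpret an edge-distinguishing $k$-coloring as an \emph{edge-injective placement} of the spider into the looped complete graph $K_k^+$, the complete graph on vertex set $[k]$ together with one loop at each vertex, so that $|E(K_k^+)|=\binom{k}{2}+k=\binom{k+1}{2}$. A vertex coloring $c$ sends each edge $uv$ of $S_{\ell_1,\ell_2,\ell_3}$ to the edge $\{c(u),c(v)\}$ of $K_k^+$ (a loop when $c(u)=c(v)$), and $c$ is edge-distinguishing exactly when this edge map is injective. Writing $v_0=c(\text{central vertex})$, the three legs become three walks from $v_0$ whose edges are globally distinct, so their images form a subgraph $F\subseteq K_k^+$ with exactly $L$ edges that decomposes into three pairwise edge-disjoint trails from the common vertex $v_0$ of lengths $\ell_1,\ell_2,\ell_3$; conversely any such $F$ yields a coloring. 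Thus I would prove that $\lambda(S_{\ell_1,\ell_2,\ell_3})\le k$ if and only if $K_k^+$ contains such an $F$.

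For the lower bounds, the first ingredient is the counting bound: since $F$ has $L$ distinct edges inside $K_k^+$, we need $\binom{k+1}{2}\ge L$, giving $k\ge k_0:=\left\lceil\frac{-1+\sqrt{1+8L}}{2}\right\rceil$. The second, sharper ingredient is a parity argument. Analyzing the endpoint multiset of three trails from $v_0$ shows $F$ has at most four vertices of odd degree. When $k$ is even, every vertex of $K_k^+$ has degree $k+1$, which is odd; passing to the complement $\overline F=K_k^+\setminus F$, a vertex is odd in $\overline F$ iff it is even in $F$, so $\overline F$ has at least $k-4$ odd-degree vertices and therefore at least $(k-4)/2$ non-loop edges. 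This forces $L=\binom{k+1}{2}-|E(\overline F)|\le\frac{k^2+4}{2}$, i.e. $k\ge\sqrt{2L-4}$, for every even $k$. I would then combine the two bounds: when $k_0$ is odd it is already admissible, so the answer is $k_0$; when $k_0$ is even the parity bound rules it out unless $\frac{k_0^2+4}{2}\ge L$, and a short comparison of $k_0$ with $\left\lceil\sqrt{2L-4}\right\rceil$ shows the minimum admissible $k$ equals $\left\lceil\sqrt{2L-4}\right\rceil$ in all remaining cases.

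For the upper bound I would exhibit the required subgraph $F$ and its trail decomposition explicitly. When the target $k$ is odd, $K_k^+$ has all even degrees and is Eulerian, so there is ample room to carve out three trails from $v_0$; when $k$ is even, I would first delete a matching of $\frac{k-4}{2}$ edges to repair the parity (meeting the bound above with equality) and then build the trails in the remaining graph. The hard part will be realizing the \emph{exact} prescribed leg lengths $\ell_1\le\ell_2\le\ell_3$ rather than merely their sum: the number of trails one can start and finish at $v_0$ is governed by how often a traversal returns to $v_0$, i.e. by $\deg_F(v_0)$, so the construction must route the long leg through a near-Eulerian traversal while reserving enough incidences at $v_0$ to launch the two shorter legs at the correct break points. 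I expect this to require a case analysis organized by the residue of $L$ and the parities of $k$ and the $\ell_i$, together with boundary arguments when $L$ sits just below $\binom{k+1}{2}$ or $\frac{k^2+4}{2}$.

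Finally, the small cases $3\le L\le 6$ must be treated by hand, and they are genuinely exceptional precisely because the return constraint at $v_0$ becomes binding. For instance, realizing the legs $(2,2,2)$ (the $L=6$, $\ell_1=2$ case) would require three trails from $v_0$ inside $K_3^+$, but the four edge-incidences at $v_0$ are too few to start three trails there and still consume all six edges, so three colors fail and four are needed. I would verify the handful of remaining small configurations directly, and confirm that for $L\ge 7$ the degree-of-$v_0$ and room constraints are never binding, leaving the counting and parity bounds as the only obstructions and thus matching the stated formula.
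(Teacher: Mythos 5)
This statement is quoted from \cite{fw}; the present paper cites it without proof, and the only portion it re-derives is the odd-$k$ embedding criterion for three-legged spiders, obtained by folding the spider into a petal graph $P_{1,c_2,c_3}$ and invoking Theorem \ref{petalembedoddk}. Your framework --- identifying an edge-distinguishing $k$-coloring with an edge-injective homomorphism into the looped complete graph, then combining the edge-count bound $\binom{k+1}{2}\geq L$ with the parity bound $L\leq\frac{k^2+4}{2}$ for even $k$ (the image has at most four odd-degree vertices while every vertex of $K_k^*$ has odd degree $k+1$) --- is exactly the machinery of Theorem \ref{embedding} and of the lower-bound arguments in Theorems \ref{petalembedevenk} and \ref{spiderembedevenk}, and your reconciliation of the two bounds into the stated two-case formula is correct.

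The genuine gap is the constructive half. You assert that for $L\geq 7$ the counting and parity bounds are the only obstructions and defer the construction of three edge-disjoint trails of prescribed lengths $\ell_1\leq\ell_2\leq\ell_3$ from a common vertex to an unspecified case analysis; but that case analysis is the entire content of the theorem. The analogous four-leg results (Theorems \ref{spiderembedoddk} and \ref{spiderembedevenk}) pick up extra exceptional conditions beyond counting and parity ($\ell_3=1$ costs one more edge when $k$ is odd, and $(k,\ell_1)=(4,2)$ is infeasible), so one cannot simply declare that no further obstruction exists for three legs without exhibiting the embeddings. For instance, when $n=\binom{k+1}{2}$ with $k$ odd the image must be all of $K_k^*$, which forces one leg to close up into a closed trail and the other two leaf-images to coincide; realizing this for arbitrary $(\ell_1,\ell_2,\ell_3)$ is where the work lies, and it is precisely what the paper's petal-graph detour is designed to handle. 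A secondary slip: your argument that $S_{2,2,2}$ needs four colors is misstated --- four edge-incidences at $v_0$ do suffice to start three trails; the actual obstruction is that parity at $v_0$ forces one of the three trails to be closed, and $K_3^*$ contains no closed trail of length $2$ (alternatively, Proposition \ref{Deltanbdeg>1} gives $\lambda\geq4$ immediately, since the central vertex has degree $3$ and all its neighbors have degree $2$).
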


\begin{theorem}[\cite{fw}]\label{SDeltaellEDCN}
Let $S_{\ell_1,\ell_2,\dotsc,\ell_\Delta}$ be a spider graph with $\Delta$ legs, where $2\leq\ell_i\leq\frac{\Delta+3}{2}$ for each $1\leq i\leq\Delta$. Then
$$\lambda(S_{\ell_1,\ell_2,\dotsc,\ell_\Delta})=\Delta+1.$$
\end{theorem}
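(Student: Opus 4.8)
The plan is to prove matching lower and upper bounds, $\lambda(S_{\ell_1,\dotsc,\ell_\Delta})\geq\Delta+1$ and $\lambda(S_{\ell_1,\dotsc,\ell_\Delta})\leq\Delta+1$. Throughout it is convenient to view an edge-distinguishing coloring $c\colon V(G)\to[k]$ as an embedding of $G$ into the looped complete graph $K_k^{+}$, the complete graph on vertex set $[k]$ with a loop at every vertex: the induced label $\{c(u),c(v)\}$ of an edge $uv$ is an edge or loop of $K_k^{+}$, and $c$ is edge-distinguishing exactly when distinct edges of $G$ receive distinct edges of $K_k^{+}$. Under this dictionary, coloring the spider amounts to packing $\Delta$ edge-disjoint trails into $K_{\Delta+1}^{+}$, one per leg, all emanating from the common vertex $c(v_0)$, of prescribed lengths $\ell_1,\dotsc,\ell_\Delta$.

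For the lower bound I would argue that $\Delta$ colors cannot suffice. Let $v_0$ be the central vertex with color $a$, and let $w_1,\dotsc,w_\Delta$ be its neighbors (the first vertices of the legs). Distinctness of the labels $\{a,c(w_i)\}$ forces at most one $w_i$ to have color $a$ and the remaining neighbors to receive pairwise distinct colors different from $a$. With only $\Delta$ colors available, leaving all neighbor colors different from $a$ is impossible, since it would require $\Delta$ distinct colors in $[\Delta]\setminus\{a\}$; hence exactly one neighbor, say $w_1$, is colored $a$, and the other $\Delta-1$ neighbors exhaust $[\Delta]\setminus\{a\}$. Consequently every label of the form $\{a,\cdot\}$, namely $\{a\}$ together with $\{a,b\}$ for all $b\neq a$, already appears among the edges at $v_0$. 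Since $\ell_1\geq2$, the leg through $w_1$ has a further vertex $x$, and the edge $w_1x$ carries the label $\{a,c(x)\}$, which is of this forbidden form and hence repeats, a contradiction. This is precisely where the hypothesis $\ell_i\geq2$ is used.

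For the upper bound I would give an explicit coloring with $\Delta+1$ colors. Set the central color to $\star$ and identify the remaining $\Delta$ colors with $\Z_\Delta$. Route leg $i$ as the trail that first takes the edge $\star\to i$, then traverses the loop at $i$, and then follows the zigzag $i,\,i+1,\,i-1,\,i+2,\,i-2,\dotsc$ in $\Z_\Delta$, truncated to total length $\ell_i$. The crucial accounting is by difference class: the $t$-th zigzag step uses an edge of difference $t$, so after the central edge leg $i$ consumes the loop at $i$ together with the difference-$d$ edges based at $i$ for $d=1,\dotsc,\ell_i-2$, all of which are the rotations by $i$ of a single base trail rooted at $0$. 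Since $\ell_i\leq\frac{\Delta+3}{2}$, the largest difference ever used is $\ell_i-2\leq\frac{\Delta-1}{2}<\frac{\Delta}{2}$, so no difference equals $\Delta/2$, and for each such $d$ the $\Delta$ rotations of a difference-$d$ edge are genuinely distinct. Hence across all legs the loops are distinct, the central edges $\{\star,i\}$ are distinct, and within each difference class the selected edges are distinct, giving $L=\sum_i\ell_i$ pairwise distinct labels, as required.

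The main obstacle is the edge-disjointness bookkeeping in the upper bound, and in particular steering clear of the matching difference $\Delta/2$: when $d=\Delta/2$ the rotation sending $\{x,x+\Delta/2\}$ to $\{x+\Delta/2,x\}$ collapses, so two legs whose starting vertices differ by $\Delta/2$ would collide on that edge. The hypothesis $\ell_i\leq\frac{\Delta+3}{2}$ is exactly the constraint that keeps every trail short enough to avoid this class in both parities of $\Delta$, and verifying this bound together with the claim that the truncated zigzags are genuine trails with no repeated edge within a leg is the technical heart of the argument. The count $\sum_i(\ell_i-1)\leq\frac{\Delta(\Delta+1)}{2}=|E(K_\Delta^{+})|$ confirms there is just enough room, so the construction is tight.
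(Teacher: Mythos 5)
This statement is quoted from reference \cite{fw} and the present paper gives no proof of it, so there is no in-paper argument to compare against; I can only assess your proposal on its own terms, and it is correct. Your lower bound is exactly Proposition \ref{Deltanbdeg>1} specialized to the central vertex (degree $\Delta\geq3$, all neighbors of degree $2$ because $\ell_i\geq2$), and your derivation of it is sound. Your upper bound is a clean explicit construction: identifying the non-central colors with $\Z_\Delta$ and routing leg $i$ as $\star,\,i,\,i,\,i+1,\,i-1,\,i+2,\dots$ makes the $t$-th post-loop edge of leg $i$ the rotation by $i$ of a fixed edge of difference $t$, and since $\ell_i-2\leq\frac{\Delta-1}{2}<\frac{\Delta}{2}$ every difference class used contains $\Delta$ pairwise distinct rotations, so the legs are edge-disjoint; together with the distinct loops and distinct central edges this yields an embedding into $K_{\Delta+1}^*$, hence $\lambda\leq\Delta+1$ by Theorem \ref{embedding}. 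The one point worth writing out in a final version is the within-leg check (consecutive zigzag vertices $i+\lceil t/2\rceil$ and $i-\lfloor t/2\rfloor$ are distinct and the edge differences $1,\dotsc,\ell_i-2$ are pairwise distinct classes), but this is immediate from the same inequality $t<\frac{\Delta}{2}$. Your closing edge count is consistent but is only a sanity check, not a needed step.
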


Although the edge-distinguishing chromatic number was introduced almost $40$ years ago, the above list seemingly exhausts all major progress on the EDCN of various families of graphs. In this paper, we extend this list by studying spider graphs with four legs. The technique used to obtain Theorem \ref{Sell1ell2ell3EDCN} was too cumbersome to apply, so we adopt a different approach here. We try to ``fold up" or embed a spider graph with four legs into petal graphs and cycles with one chord to aid our discussions. Through this method, we determine the EDCN of three families of graphs: some petal graphs, all cycles with one chord, and all spider graphs with four legs.

To determine the EDCN of various graphs, let us first introduce a simple tool to establish lower bounds.

\begin{proposition}[\cite{fw}]\label{Deltanbdeg>1}
Let $G$ be a simple graph with maximum degree $\Delta(G)$. Then $\lambda(G)\geq\Delta(G)$. Furthermore, if there exists a vertex $u$ of $G$ such that $\deg(u)=\Delta(G)$ and every neighbor of $u$ has degree $>1$, then $\lambda(G)\geq\Delta(G)+1$.
\end{proposition}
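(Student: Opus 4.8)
The plan is to fix an edge-distinguishing coloring $c\colon V(G)\to[k]$ and reason locally about a vertex $u$ of maximum degree $\Delta=\Delta(G)$ together with its $\Delta$ incident edges. Writing $v_1,\dotsc,v_\Delta$ for the neighbors of $u$, each induced label $c'(uv_i)=\{c(u),c(v_i)\}$ contains the color $c(u)$, and since $c'$ is injective these $\Delta$ labels are pairwise distinct. The key preliminary observation is that at most one neighbor can receive the color $c(u)$: if two neighbors $v_i,v_j$ were both colored $c(u)$, then $c'(uv_i)=c'(uv_j)=\{c(u)\}$, contradicting injectivity. Hence the colors of the neighbors not colored $c(u)$ are pairwise distinct and all differ from $c(u)$.

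For the first inequality I would simply count the colors appearing on $\{u,v_1,\dotsc,v_\Delta\}$. If no neighbor is colored $c(u)$, then the $\Delta$ neighbor colors together with $c(u)$ give $\Delta+1$ distinct colors; if exactly one neighbor is colored $c(u)$, the remaining $\Delta-1$ neighbors contribute $\Delta-1$ colors distinct from $c(u)$, for a total of $\Delta$. Either way $k\geq\Delta$, so $\lambda(G)\geq\Delta$.

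For the second inequality, note that the only way the local count can be as small as $\Delta$ is the case in which exactly one neighbor, say $v_1$, satisfies $c(v_1)=c(u)$; the plan is to show the degree hypothesis forces an extra color even here. Since $\deg(v_1)>1$, the vertex $v_1$ has a neighbor $w\neq u$, and the edge $v_1w$ is distinct from every edge $uv_i$. Its label is $\{c(v_1),c(w)\}=\{c(u),c(w)\}$. If $c(w)=c(u)$, this label equals $\{c(u)\}=c'(uv_1)$, violating injectivity; so $c(w)\neq c(u)$, and injectivity against each $c'(uv_i)=\{c(u),c(v_i)\}$ forces $c(w)\neq c(v_i)$ for all $i\geq2$. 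Thus $c(w)$ lies outside $\{c(u),c(v_2),\dotsc,c(v_\Delta)\}$, producing a $(\Delta+1)$-st color, so $k\geq\Delta+1$. Combined with the case in which no neighbor is colored $c(u)$ (which already exhibits $\Delta+1$ colors), this yields $\lambda(G)\geq\Delta+1$.

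Since the whole argument is a careful local count, there is no deep obstacle; the step requiring the most care is the final case, where one must verify both that the extra edge $v_1w$ is genuinely distinct from the edges at $u$—so that injectivity of $c'$ actually applies—and that $c(w)$ is forced to avoid the entire set of $\Delta$ already-used colors rather than merely some of them. Keeping the two global cases organized (one neighbor sharing $u$'s color versus none) is the only bookkeeping involved.
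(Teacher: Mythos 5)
Your proof is correct: the local count at a maximum-degree vertex $u$ (at most one neighbor may share $c(u)$, all other neighbor colors are distinct from each other and from $c(u)$, and in the tight case a second edge at the repeated-color neighbor forces a fresh color) is exactly the standard argument for this fact. Note that the present paper does not prove this proposition at all --- it is imported from \cite{fw} --- so there is no in-text proof to compare against, but your argument is complete, handles the only delicate points (that the edge $v_1w$ is genuinely a different edge from the edges at $u$, and that injectivity excludes $c(w)$ from all $\Delta$ used colors), and is what one would expect the cited proof to be.
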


Next, we present the main tool to determine the EDCN of a graph, namely Theorem \ref{embedding}. Let $K_k$ denote the complete graph on $k$ vertices $\{v_0,v_1,v_2,\dotsc,v_{k-1}\}$, and let $v_iv_j$ denote the edge between vertices $v_i$ and $v_j$. If we attach a loop $v_iv_i$ at every vertex $v_i$ of $K_k$, then we obtain a new graph, which is denoted by $K_k^*$. In other words, 
$$K_k^*=K_k\cup\{v_0v_0,v_1v_1,v_2v_2,\dotsc,v_{k-1}v_{k-1}\}.$$
A \emph{graph homomorphism} is a function from the vertex set of one graph to the vertex set of another that preserves edges. An \emph{embedding} of a graph $G$ in $K_k^*$ refers to a graph homomorphism from $G$ to $K_k^*$ that induces an injection from $E(G)$ to $E(K_k^*)$.

\begin{theorem}[\cite{fhp}]\label{embedding}
Let $G$ be a simple graph, and let $k$ be a positive integer. Then $\lambda(G)\leq k$ if and only if there is an embedding of $G$ in $K_k^*$.
\end{theorem}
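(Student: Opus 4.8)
The plan is to observe that an edge-distinguishing vertex coloring and an embedding into $K_k^*$ are essentially the same object viewed from two perspectives, so the proof amounts to a careful identification of the two notions rather than any substantive construction. First I would identify the color set $[k]$ with the vertex set $\{v_0,v_1,\dotsc,v_{k-1}\}$ of $K_k^*$, so that a vertex coloring $c\colon V(G)\to[k]$ is literally the same data as a vertex map $\phi\colon V(G)\to V(K_k^*)$.

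The crucial structural point is that $K_k^*$ contains every possible edge on its vertex set: the clique $K_k$ supplies an edge between every pair of distinct vertices, and the attached loops supply an edge $v_iv_i$ at each vertex. Consequently, for any edge $uv$ of $G$ (where $u\neq v$ since $G$ is simple), the image $\{\phi(u),\phi(v)\}$ is always an edge of $K_k^*$: a genuine $K_k$-edge when $\phi(u)\neq\phi(v)$, and a loop when $\phi(u)=\phi(v)$. Hence every vertex map $\phi$ is automatically a graph homomorphism from $G$ to $K_k^*$, and the only remaining content in the definition of an embedding is that the induced map on edges be injective.

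With this in hand I would dispatch both directions at once by noting that the edge induced by $\phi$ on $uv$ is exactly $\{\phi(u),\phi(v)\}=\{c(u),c(v)\}=c'(uv)$. Therefore the induced edge map of $\phi$ is injective if and only if $c'$ is injective, i.e.\ if and only if $c$ is edge-distinguishing. Running this equivalence forward gives: if $\lambda(G)\leq k$ then some edge-distinguishing $c$ exists, and the associated $\phi$ is an embedding; running it backward gives: any embedding $\phi$ yields an edge-distinguishing coloring with $k$ colors, whence $\lambda(G)\leq k$.

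The argument has no real obstacle, but the one point that genuinely must be checked — and the reason the statement is phrased in terms of $K_k^*$ rather than $K_k$ — is the role of the loops. Because $G$ is simple while $\phi$ may send two adjacent vertices to the same color, the homomorphism condition can be met only if the target graph possesses loops to absorb these collapsed edges; were we to use $K_k$ instead, the homomorphism requirement would force a proper coloring and we would instead be computing the harmonious chromatic number. I would flag this explicitly so that the reader sees precisely why the looped complete graph is the correct target.
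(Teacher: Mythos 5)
Your proof is correct. The paper does not prove this statement at all --- it is quoted from Frank, Harary, and Plantholt \cite{fhp} as a known tool --- but your argument is the standard one and is complete: identifying $[k]$ with $V(K_k^*)$ makes a coloring and a vertex map the same data, every map into $K_k^*$ is automatically a homomorphism because all pairs (including equal pairs, via the loops) are adjacent, and the induced edge map coincides with $c'$, so injectivity of one is injectivity of the other. Your closing remark about why the loops are essential (and why replacing $K_k^*$ by $K_k$ would instead compute the harmonious chromatic number) is exactly the right point to flag.
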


Another trivial but useful observation is the following.

\begin{proposition}\label{subgraph}
Let $H$ be a subgraph of $G$. If $G$ can be embedded in $K_k^*$, then $H$ can also be embedded in $K_k^*$, and hence, $\lambda(H)\leq\lambda(G)$.
\end{proposition}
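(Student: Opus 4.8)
The plan is to take any embedding of $G$ in $K_k^*$ and restrict it to the subgraph $H$. First I would fix an embedding, that is, a graph homomorphism $\phi\colon V(G)\to V(K_k^*)$ that induces an injection from $E(G)$ to $E(K_k^*)$, and consider its restriction $\psi=\phi|_{V(H)}$ to the vertex set of $H$. Because $H$ is a subgraph of $G$, we have $V(H)\subseteq V(G)$ and $E(H)\subseteq E(G)$, so $\psi$ is a well-defined map from $V(H)$ into $V(K_k^*)$.

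Next I would verify that $\psi$ is itself an embedding of $H$ in $K_k^*$. For the homomorphism property, any edge $uv\in E(H)$ is also an edge of $G$, so $\phi$ carries it to an edge $\phi(u)\phi(v)\in E(K_k^*)$; since $\psi$ agrees with $\phi$ on $V(H)$, it preserves this edge. For the injectivity of the induced edge map, the key observation is that the induced edge label $\{c(u),c(v)\}$ depends only on the images of the two endpoints, so the edge map induced by $\psi$ on $E(H)$ is precisely the restriction to $E(H)$ of the edge map induced by $\phi$ on $E(G)$. The restriction of an injection to a subset of its domain is again injective, so $\psi$ induces an injection from $E(H)$ to $E(K_k^*)$, and is therefore an embedding of $H$ in $K_k^*$.

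Finally, to deduce the inequality on the edge-distinguishing chromatic numbers, I would set $k=\lambda(G)$. By Theorem \ref{embedding}, the inequality $\lambda(G)\leq k$ guarantees an embedding of $G$ in $K_k^*$, whence by the argument above $H$ also embeds in $K_k^*$. Applying the converse direction of Theorem \ref{embedding} to $H$ then yields $\lambda(H)\leq k=\lambda(G)$.

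I do not expect any genuine obstacle in this proof, since the statement is purely a matter of checking that the two defining conditions of an embedding are inherited by passing to a subgraph. The only point deserving explicit mention is the observation that restricting the vertex map automatically restricts the induced edge map to the corresponding subset of edges, which is what makes the injectivity pass down; everything else is routine bookkeeping combined with two applications of Theorem \ref{embedding}.
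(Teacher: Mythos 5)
Your argument is correct and is exactly the intended one: the paper states this proposition without proof, calling it a trivial observation, and your restriction-of-the-embedding argument together with two applications of Theorem \ref{embedding} is the standard justification. No gaps.
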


In Section \ref{sec:kodd}, we begin by proving the necessary and sufficient conditions to embed a subfamily of caterpillar trees in $K_k^*$ when $k$ is odd. Then, we proceed to prove the necessary and sufficient conditions to embed ``petal graphs" with two petals and one loop, cycles with one chord, and finally spider graphs with four legs in $K_k^*$. For each of these families of graphs, the treatment for odd $k$ is different from even $k$, so we separate our discussion on odd and even $k$ into Sections \ref{sec:kodd} and \ref{sec:keven} respectively. Lastly, in Section \ref{sec:edcn}, we determine the EDCN of these petal graphs, chorded cycles, and spider graphs.

\section{$k$ is odd}\label{sec:kodd}

\subsection{Caterpillars}

In the study of graph labelings, caterpillar trees are often the first object studied after paths. Naturally, this is where our discussion begins.

\begin{definition}
Let $\ell$ be a positive integer and $m$ be a nonnegative integer. Let $CP_\ell^{i_1,i_2,\dotsc,i_m}$ denote a \emph{caterpillar tree} with the \emph{central path} $y_0y_1y_2\dotsb y_\ell$ and $m$ extra edges $y_{\ell+1}y_{i_1},\linebreak y_{\ell+2}y_{i_2},\dotsc,y_{\ell+m}y_{i_m}$, where $0<i_1\leq i_2\leq\dotsb\leq i_m<\ell$.
\end{definition}

If $i_1,i_2,\dotsc,i_m$ are distinct and $m\leq k$, then we would like to embed such caterpillar trees in $K_k^*$ with all the extra edges $y_{\ell+1}y_{i_1},y_{\ell+2}y_{i_2},\dotsc,y_{\ell+m}y_{i_m}$ embedded as loops. Hence, the main question is whether we can embed the central path $y_0y_1y_2\dotsb y_\ell$ in $K_k^*$ such that $y_{i_1},y_{i_2},\dotsc,y_{i_m}$ are mapped to distinct vertices. The following theorem by Dvo\v{r}\'{a}k et.\ al.\ answers our question positively with only a few exceptions.

\begin{theorem}[\cite{dhl}]\label{blackcycle}
Let $k\geq3$ be odd, let $\mathcal{C}=z_0z_1z_2\dotsb z_{\binom{k}{2}-1}z_0$ be a cycle of length $\binom{k}{2}$ such that exactly $k$ vertices, namely $z_{i_1},z_{i_2},\dotsc,z_{i_k}$, are black, where $0\leq i_1,i_2,\dotsc,i_k\leq\binom{k}{2}-1$. Then there exists an embedding of $\mathcal{C}$ in $K_k$ that is injective on the black vertices if and only if either $k\neq5$ or $k=5$ and $\{i_1,i_2,i_3,i_4,i_5\}\not\mod{\{0+c,3+c,4+c,6+c,7+c\}\text{ or }\{0+c,1+c,3+c,7+c,9+c\}}{10}$ for every $c\in\Z_{10}$.
\end{theorem}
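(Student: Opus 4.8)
The plan is to reinterpret the embedding as an Eulerian circuit. Since $K_k$ has exactly $\binom{k}{2}$ edges and $\mathcal{C}$ has $\binom{k}{2}$ edges, any edge-injective homomorphism $\phi\colon\mathcal{C}\to K_k$ must use every edge of $K_k$ exactly once; as $k$ is odd, every vertex of $K_k$ has even degree $k-1$, so such a $\phi$ is precisely an Eulerian circuit, recorded as a cyclic word $w_0w_1\dotsb w_{\binom{k}{2}-1}$ with $w_jw_{j+1}\in E(K_k)$ and all $\binom{k}{2}$ edges distinct. In any such circuit each vertex is visited exactly $(k-1)/2$ times. Because there are exactly $k$ black positions and $K_k$ has $k$ vertices, ``injective on the black vertices'' is equivalent to ``$w_{i_1},\dotsc,w_{i_k}$ is a permutation of $V(K_k)$'', i.e.\ the black positions are \emph{rainbow}. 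So the theorem becomes the question: for which cyclic placements of $k$ black positions does some Eulerian circuit of $K_k$ read all $k$ labels on them?

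A first reduction I would record is an automatic-distinctness observation. If two black positions are at cyclic distance $1$, then $w_i\neq w_{i+1}$ since $K_k$ has no loops; if they are at cyclic distance $2$, then $w_i=w_{i+2}$ would force the edge $w_iw_{i+1}$ to be traversed twice, contradicting edge-injectivity. Hence black positions at cyclic distance $1$ or $2$ are automatically distinct in \emph{every} Eulerian circuit, and only pairs at cyclic distance $\geq3$ impose genuine constraints. This lets me recast the whole problem in terms of the multiset of gaps $g_1,\dotsc,g_k$ (the arc lengths between consecutive black positions, $\sum g_t=\binom{k}{2}$), which is the natural invariant because $K_k$ is vertex-transitive with a large automorphism group and $\mathcal{C}$ carries a dihedral symmetry.

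For the necessity at $k=5$ I would translate the two exceptional residue families into gap multisets: $\{0,3,4,6,7\}$ gives gaps $(3,1,2,1,3)$ and $\{0,1,3,7,9\}$ gives $(1,2,4,2,1)$, i.e.\ the multisets $\{1,1,2,3,3\}$ and $\{1,1,2,2,4\}$. Since $\binom{5}{2}=10$ is small, I would verify impossibility by a finite enumeration: list the Eulerian circuits of $K_5$ up to the action of the dihedral group on $\mathcal{C}$ and of $S_5$ on the labels, read off for each the induced pattern of which positions are forced to coincide, and check that for these two gap multisets no circuit makes the five black positions rainbow. The distance-$\geq3$ pairs created by a gap of size $3$ or $4$ are exactly what cannot all be resolved simultaneously in $K_5$.

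The substantive direction is sufficiency: for $k=3$ the unique Eulerian circuit $v_0v_1v_2$ is rainbow, so I would concentrate on $k\geq7$ (treating the non-exceptional $k=5$ gap multisets by the same finite enumeration). Here I would build the circuit from a Hamiltonian decomposition $K_k=H_1\cup\dotsb\cup H_{(k-1)/2}$ (Walecki), splicing the $(k-1)/2$ Hamiltonian cycles into a single Eulerian circuit; each cycle $H_t$ contributes a ``round'' of length $k$ in which all $k$ labels appear once, and I retain freedom in the cyclic rotation and orientation of each round and in the splice transitions. The aim, and the crux of the argument, is that for $k\geq7$ the rounds are long enough and numerous enough that any gap multiset can be accommodated, placing distinct labels at all $k$ black positions. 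The main obstacle I expect is the adversarial clustering of several black positions at mutual distance $\geq3$ (gaps of size $3$ and $4$): one must show that the splicing freedom always suffices except for the two $k=5$ patterns isolated above, which likely demands a careful case analysis of how the black positions distribute within and across the rounds.
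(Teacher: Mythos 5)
This statement is quoted from Dvo\v{r}\'{a}k--Havel--Liebl \cite{dhl}; the paper gives no proof of it, so your attempt cannot be measured against an in-paper argument and must stand on its own. Your opening reformulation is correct and is the right way to read the theorem: since $\mathcal{C}$ and $K_k$ both have $\binom{k}{2}$ edges and $k$ is odd, an edge-injective homomorphism is exactly an Eulerian circuit of $K_k$, each vertex is visited $(k-1)/2$ times, and injectivity on the $k$ black positions means they read a permutation of $V(K_k)$. The observation that black positions at cyclic distance $1$ or $2$ are automatically distinct is also correct. However, the next step --- recasting the problem in terms of the \emph{multiset} of gaps --- is wrong. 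The dihedral symmetry of $\mathcal{C}$ only permits rotating or reflecting the cyclic \emph{sequence} of gaps, not permuting it arbitrarily, and the theorem itself shows the multiset is not a complete invariant: $\{0,3,4,6,7\}$ has cyclic gap sequence $(3,1,2,1,3)$ and is exceptional, while $\{0,3,6,7,8\}$ has gap sequence $(3,3,1,1,2)$, the same multiset $\{1,1,2,3,3\}$, and is \emph{not} in the exceptional list. Any argument organized by gap multisets will therefore either miss exceptional patterns or declare realizable patterns impossible.

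The more serious problem is that the sufficiency direction --- which is essentially the entire content of the theorem --- is not proved. For $k\geq7$ you propose splicing a Walecki decomposition into an Eulerian circuit and then assert that ``the rounds are long enough and numerous enough that any gap multiset can be accommodated,'' explicitly flagging this as ``the crux'' and saying it ``likely demands a careful case analysis.'' That is the theorem restated, not an argument: you give no mechanism showing that the rotation, orientation, and splice-point freedom can always place $k$ distinct labels on an arbitrary prescribed set of $k$ positions, nor any reason the obstruction disappears for $k\geq7$ when it is present for $k=5$. Likewise the $k=5$ case is deferred to an enumeration that is feasible but not carried out. As it stands the proposal is a plausible plan of attack with a flawed intermediate reduction, not a proof.
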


This result is paramount to the proof of the following theorem.

\begin{theorem}\label{caterpillarembedoddk}
Let $k$ be odd, and let $m\leq k$. Consider caterpillar trees $CP_\ell^{i_1,i_2,\dotsc,i_m}$ such that $0<i_1<i_2<\dotsb<i_m<\ell$ and $CP_\ell^{i_1,i_2,\dotsc,i_m}\neq CP_4^{1,3}$. Such caterpillar trees can be embedded in $K_k^*$ if and only if $\ell+m\leq\binom{k+1}{2}$.
\end{theorem}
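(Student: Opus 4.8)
The plan is to prove both directions of the biconditional, leaning heavily on Theorem~\ref{blackcycle}. The key structural observation is that $K_k^*$ has exactly $\binom{k}{2}$ non-loop edges and $k$ loops, so $|E(K_k^*)|=\binom{k}{2}+k=\binom{k+1}{2}$. Since an embedding induces an injection on edges, and the caterpillar $CP_\ell^{i_1,\dotsc,i_m}$ has exactly $\ell+m$ edges ($\ell$ on the central path plus $m$ extra edges), the necessity of $\ell+m\leq\binom{k+1}{2}$ is immediate: no injection into an edge set of size $\binom{k+1}{2}$ can have domain larger than $\binom{k+1}{2}$. That disposes of the ``only if'' direction with a one-line counting argument.

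For the ``if'' direction, assume $\ell+m\leq\binom{k+1}{2}$. The strategy is to embed the $m$ extra edges as loops and route the central path $y_0y_1\dotsb y_\ell$ through the non-loop edges of $K_k$, while ensuring the vertices $y_{i_1},\dotsc,y_{i_m}$ receive distinct colors (so that the $m$ loops they carry land on distinct vertices of $K_k^*$, keeping the induced edge map injective). First I would reduce to the extremal case $\ell=\binom{k}{2}$ and $m=k$: if the inequality is strict we can always extend a shorter caterpillar to a longer one and invoke Proposition~\ref{subgraph}, since any $CP_\ell^{i_1,\dotsc,i_m}$ with $\ell+m\leq\binom{k+1}{2}$ is a subgraph of some caterpillar meeting the bound with equality. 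In the extremal case, the central path has exactly $\binom{k}{2}$ edges, so as a walk in $K_k$ it must traverse every non-loop edge exactly once---that is, it must be an Eulerian circuit of $K_k$, which exists precisely because $k$ is odd (every vertex of $K_k$ then has even degree $k-1$). Closing the path $y_0y_1\dotsb y_{\binom{k}{2}}$ into a cycle by identifying its endpoints produces exactly the cycle $\mathcal{C}$ of length $\binom{k}{2}$ in Theorem~\ref{blackcycle}, and marking the $k$ vertices $y_{i_1},\dotsc,y_{i_k}$ as black translates the requirement ``distinct colors on the $y_{i_j}$'' into ``the embedding is injective on the black vertices.''

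The main obstacle is the exceptional case $k=5$, where Theorem~\ref{blackcycle} fails for black-vertex positions congruent to the two forbidden patterns modulo $10$. Here the plan is to check directly that any caterpillar whose marked indices $\{i_1,\dotsc,i_5\}$ realize a forbidden pattern can nonetheless be embedded by a different device: since we have a free endpoint (the path is open, not a genuine cycle) and the extra edges are loops rather than path edges, I would exploit the slack between an open path on $\binom{k}{2}+1$ vertices and a closed cycle---for instance, by rerouting the walk so that the repeated color at the ``seam'' where we closed the cycle falls on a non-marked vertex, or by permuting which vertex of $K_5$ hosts the doubled color. The excluded case $CP_4^{1,3}$ must also be handled, and I expect it corresponds exactly to a small configuration where the forbidden patterns cannot be avoided; verifying that $CP_4^{1,3}$ is the \emph{only} genuine exception (for all odd $k$, not just $k=5$) is the delicate bookkeeping step. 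Concretely, I anticipate a short case analysis showing that for $k=5$ the two forbidden residue classes either do not arise from strictly increasing indices $0<i_1<\dotsb<i_5<\ell$ with the loop-embedding freedom we retain, or can be circumvented, leaving $CP_4^{1,3}$ as the sole obstruction.
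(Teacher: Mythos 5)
Your necessity argument and the broad outline (embed the extra edges as loops, route the central path so that Theorem~\ref{blackcycle} applies) match the paper, but there are two genuine gaps in the sufficiency direction.

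First, your reduction to the extremal case ``$\ell=\binom{k}{2}$ and $m=k$'' is not valid, and this hides the main technical content of the proof. A caterpillar with $\ell+m\leq\binom{k+1}{2}$ and $m<k$ need not be a subgraph of one with $\ell=\binom{k}{2}$ and $m=k$: for example, when $m$ is small the central path can have up to $\binom{k+1}{2}-m>\binom{k}{2}$ edges, and the longest path in any caterpillar with central path of length $\binom{k}{2}$ is $\binom{k}{2}$. The correct extremal case is $\ell+m=\binom{k+1}{2}$ with $m$ \emph{fixed}, so that $\ell=\binom{k}{2}+(k-m)$. When $m<k$ the central path is strictly longer than the number of non-loop edges of $K_k$, so it cannot be an Eulerian circuit of $K_k$; it must itself traverse $k-m$ of the loops of $K_k^*$. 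The paper handles this by selecting $k-m$ disjoint consecutive pairs $y_{j_\beta},y_{j_\beta+1}$ of unmarked path vertices (this is the Claim, which needs $k\geq7$ for the counting to go through), collapsing each pair to a single vertex so that the corresponding path edge becomes a loop, and only then obtaining a cycle of length exactly $\binom{k}{2}$ with $k$ black vertices to feed into Theorem~\ref{blackcycle}. Your proposal has no analogue of this device, and the Eulerian-circuit argument you give only covers $m=k$.

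Second, the exceptional case $k=5$ is not actually resolved. Forbidden patterns do arise from legitimate caterpillars: for instance $\{1,4,5,7,8\}\equiv\{0,3,4,6,7\}+1\pmod{10}$, so $CP_{10}^{1,4,5,7,8}$ defeats the loops-as-loops strategy entirely, yet the theorem asserts it embeds in $K_5^*$. Rescuing such cases requires abandoning the convention that every extra edge is embedded as a loop (or some equally concrete alternative), not merely ``rerouting the seam,'' and your proposal only anticipates that a case analysis will work rather than carrying one out. The paper settles $k=3$ and $k=5$ by exhaustive verification, which is also where $CP_4^{1,3}$ is identified as the unique exception (it lives at $k=3$, not $k=5$). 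As written, the proposal proves the result only for $m=k$, $k\geq7$, modulo the reduction issue above.
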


\begin{proof}
The number of edges in $CP_\ell^{i_1,i_2,\dotsc,i_m}$ and $K_k^*$ are $\ell+m$ and $\binom{k+1}{2}$ respectively, so $\ell+m\leq\binom{k+1}{2}$ is a necessary condition for $CP_\ell^{i_1,i_2,\dotsc,i_m}$ to be embedded in $K_k^*$.

To prove that the condition is sufficient, if $\ell'+m<\binom{k+1}{2}$, we can first consider $CP_{\ell'}^{i_1,i_2,\dotsc,i_m}$ as a subgraph of $CP_\ell^{i_1,i_2,\dotsc,i_m}$, where $\ell+m=\binom{k+1}{2}$. Due to Proposition \ref{subgraph}, it suffices to show that $CP_\ell^{i_1,i_2,\dotsc,i_m}$ can be embedded in $K_k^*$ if $\ell+m=\binom{k+1}{2}$ and $CP_\ell^{i_1,i_2,\dotsc,i_m}\neq CP_4^{1,3}$.

If $k=1$, then $\ell+m=\binom{k+1}{2}=1$, implying that $\ell=1$ and $m=0$. It is obvious that $CP_1$, which is a path with two vertices, can be embedded in $K_1^*$, which is a loop at one vertex. If $k=3$ or $5$, we exhaust all possible caterpillar trees that satisfy the given conditions, and the only caterpillar tree that cannot be embedded in $K_k^*$ is $CP_4^{1,3}$. If $k\geq7$, we need the following claim so that we can apply Theorem \ref{blackcycle} in our proof.

\underline{\textit{Claim}}: If $k\geq7$ and $m<k$, then there exists a set
$$B=\{y_{j_1},y_{j_1+1},y_{j_2},y_{j_2+1},\dotsc,y_{j_{k-m}},y_{j_{k-m}+1}\}$$
of $2(k-m)$ distinct vertices, where $j_1<j_2<\dotsb<j_{k-m}<\ell$, such that $\{y_{i_1},y_{i_2},\dotsc,y_{i_m}\}\cap B=\emptyset$.

\underline{\textit{Proof of Claim}}: Let
$$J=\big\{j\in\N:j<\ell,\ j\text{ is odd},\text{ and }\{j,j+1\}\cap\{i_1,i_2,\dotsc,i_m\}=\emptyset\big\}.$$
Note that $\left\lfloor\frac{\ell}{2}\right\rfloor\geq\frac{1}{2}(\ell-1)=\frac{1}{2}\left(\binom{k+1}{2}-m-1\right)\geq\frac{1}{2}\left(\binom{k+1}{2}-k\right)=\frac{1}{2}\binom{k}{2}\geq k$ when $k\geq7$. Hence, $|J|\geq\left\lfloor\frac{\ell}{2}\right\rfloor-m\geq k-m$. As a result, our claim follows by picking distinct integers $j_1,j_2,\dotsc,j_{k-m}$ from $J$.

Assume $k\geq7$. Let $CP_\ell^{i_1,i_2,\dotsc,i_m}$ be a caterpillar tree that satisfies the conditions given by the statement of the theorem. Let $j_0=-1$ and $j_{k-m+1}=\ell$, and if $m<k$, then let $B$ be the set of vertices as provided by the claim. Let
$$\phi:\{y_0,y_1,y_2,\dotsc,y_{\ell+m}\}\to\{z_0=z_{\binom{k}{2}},z_1,z_2,\dotsc,z_{\binom{k}{2}-1}\}$$
be defined such that
$$\phi(y_\alpha)=\left\{\begin{tabular}{ll}
$z_{\alpha-\beta}$&if $j_\beta<\alpha\leq j_{\beta+1}$ for some $0\leq\beta\leq k-m$; and\\
$\phi(y_{i_\gamma})$&if $\alpha=\ell+\gamma$ for some $1\leq\gamma\leq m$.
\end{tabular}\right.$$
This function defines an embedding from $CP_\ell^{i_1,i_2,\dotsc,i_m}$ to a graph $\mathcal{C}^*$ that consists of a cycle $z_0z_1z_2\dotsb z_{\binom{k}{2}-1}z_{\binom{k}{2}}$ of length $\binom{k}{2}$ together with the set of $k$ loops
$$\{\phi(y_{i_\gamma})\phi(y_{\ell+\gamma}):1\leq\gamma\leq m\}\cup\{\phi(y_{j_{\beta+1}})\phi(y_{j_{\beta+1}+1})=z_{j_{\beta+1}-\beta}z_{j_{\beta+1}-\beta}:0\leq\beta\leq k-m-1\}.$$
Note that these $k$ loops are at distinct vertices due to the constructions in our claim and our function $\phi$.

Define the cycle $\mathcal{C}$ by removing all the loops from $\mathcal{C}^*$ and coloring the vertices
$$\phi(y_{i_1}),\phi(y_{i_2}),\dotsc,\phi(y_{i_m}),\phi(y_{j_1}),\phi(y_{j_2}),\dotsc,\phi(y_{j_{k-m}})$$
black. Note that these are all the vertices that have loops in $\mathcal{C}^*$. By Theorem \ref{blackcycle}, there exists an embedding of $\mathcal{C}$ in $K_k$ that is injective on the black vertices. As a result, there is an embedding of $\mathcal{C}^*$ in $K_k^*$ with the loops embedded as loops, which completes our proof.
\end{proof}

\subsection{Petal graphs}

A petal graph can be considered as a resultant graph by connecting the legs of a spider graph. It can also be considered as a resultant graph by identifying vertices of a chorded cycle. Hence, it is logical to study the embedding of petal graphs in $K_k^*$ prior to the subsections on cycles with one chord and spider graphs.

\begin{definition}
Let  $m\in\N$ such that $m\geq2$, and let $c_1,c_2,\dotsc,c_m\in\N$ such that $c_1\leq c_2\leq\dotsb\leq c_m$. Let $P_{c_1,c_2,\dotsc,c_m}$ be the \emph{petal graph} that has vertices $u_0,u^1_1,u^1_2,\dotsc,u^1_{c_1-1},\linebreak u^2_1,u^2_2,\dotsc,u^2_{c_2-1},\dotsc,u^m_1,u^m_2,\dotsc,u^m_{c_m-1}$, and for each $i=1,2,\dotsc,m$, the edges form a cycle $u_0u^i_1u^i_2\dotsc u^i_{c_i-1}u_0$ of length $c_i$, which is called the \emph{$i$-th petal} of the petal graph. If $c_i=1$, then the $i$-th petal of $P_{c_1,c_2,\dotsc,c_m}$ is simply a loop.
\end{definition}

We are going to show that petal graphs $P_{1,c_2,c_3}$ can be embedded in $K_k^*$ when $k$ is odd as long as some trivial necessary conditions are satisfied.

\begin{theorem}\label{petalembedoddk}
If $k$ is odd, then $P_{1,c_2,c_3}$ can be embedded in $K_k^*$ if and only if $c_2\geq3$ and $1+c_2+c_3\leq\binom{k+1}{2}$.
\end{theorem}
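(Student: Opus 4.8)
The plan is to prove both directions through Theorem \ref{embedding}, reading ``$\lambda(P_{1,c_2,c_3})\le k$'' as ``$P_{1,c_2,c_3}$ embeds in $K_k^*$.'' For necessity I would first note that $K_k^*$ has exactly $\binom{k+1}{2}$ edges while $P_{1,c_2,c_3}$ has $1+c_2+c_3$ edges, and an embedding injects $E(P_{1,c_2,c_3})$ into $E(K_k^*)$ by definition; hence $1+c_2+c_3\le\binom{k+1}{2}$. For the bound $c_2\ge3$, observe that $c_2=1$ would make the second petal a loop at $u_0$, which together with the first petal produces two loops at $u_0$ inducing the same label $\{c(u_0)\}$, while $c_2=2$ would make the second petal a pair of parallel edges between $u_0$ and $u^2_1$, both inducing $\{c(u_0),c(u^2_1)\}$. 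In either case the induced edge coloring is not injective, so no embedding exists and $c_2\ge3$ is forced.

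For sufficiency I would follow the template of the proof of Theorem \ref{caterpillarembedoddk}: dispose of the small odd cases $k=3,5$ by direct inspection (noting that $k=3$ admits no petal satisfying the hypotheses, since $c_2\ge3$ already forces $1+c_2+c_3\ge7>\binom{4}{2}$), and construct an explicit embedding for $k\ge7$ by routing the petal graph onto a cycle $\mathcal{C}$ of length $\binom{k}{2}$ together with $k$ loops, then invoking Theorem \ref{blackcycle}. The idea is to send $u_0$ to a fixed vertex $v=z_0$, trace the two petals along $\mathcal{C}$, and realize the $k$ required loops as the first petal's loop at $v$ together with $k-1$ \emph{skip} edges at which two consecutive petal vertices are sent to a common vertex. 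Because a short cycle is not a subgraph of a longer one, the subgraph reduction used for caterpillars (Proposition \ref{subgraph}) is unavailable here; instead I would build the slack $\binom{k+1}{2}-(1+c_2+c_3)$ directly into the construction as additional skip edges, which is possible since $c_2,c_3\ge3$ leaves enough genuine edges to keep each petal a genuine cycle in the image. Finally I would color black exactly the $k$ loop-vertices and apply Theorem \ref{blackcycle} to embed $\mathcal{C}$ injectively on them, yielding an embedding of the loops-augmented cycle into $K_k^*$ and hence of $P_{1,c_2,c_3}$.

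The hard part will be the junction at $u_0$. Since $u_0$ lies on both petals and also carries the first petal's loop, the genuine-edge image is a \emph{figure-eight} rather than a single cycle, and a monotone traversal of $\mathcal{C}$ cannot leave $v$ and return to it midway; so the two petals must be laid out as two arcs that both begin and end at $v$, with the skip edges distributed along them so that the lengths come out to exactly $c_2$ and $c_3$. I expect the delicate points to be (i) guaranteeing that the $k$ loop-vertices are pairwise distinct---in particular that no skip edge places a loop at $v$, which already carries the first petal's loop---so that the injectivity of Theorem \ref{blackcycle} on black vertices can be applied, and (ii) checking that the auxiliary configuration never degenerates into the single exceptional caterpillar $CP_4^{1,3}$ excluded by Theorem \ref{caterpillarembedoddk}. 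These are precisely the places where the simple counting that suffices for caterpillars must be upgraded to accommodate the shared vertex and the potential loop collision there.
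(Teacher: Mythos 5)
Your necessity argument is fine, but the sufficiency argument has a gap that I do not think can be repaired within the framework you propose. Theorem \ref{blackcycle} embeds a \emph{single cycle} of length $\binom{k}{2}$ into $K_k$ so that the black vertices land on distinct vertices; it gives you no control whatsoever over which \emph{non-black} (or black) vertices coincide in the image. The non-loop part of $P_{1,c_2,c_3}$ is, as you say, a figure-eight: its Eulerian closed walk visits $u_0$ twice, at positions $0$ and $c_2$ of the traversal. To realize this walk as an embedding of a cycle $\mathcal{C}=z_0z_1\dotsb z_{\binom{k}{2}-1}z_0$ you would need the two vertices of $\mathcal{C}$ corresponding to those two visits to be mapped to the \emph{same} vertex $v_0$ of $K_k$. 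Theorem \ref{blackcycle} cannot enforce such an identification: if you leave both vertices uncolored you have no control, and if you color both black the theorem forces them to be mapped to \emph{distinct} vertices, which is the opposite of what you need. You correctly identify this junction as ``the hard part,'' but you offer no mechanism for resolving it, and since the whole construction hinges on it, the proof does not go through. (The exceptional-configuration check and the loop-collision check you list as delicate points are genuine but secondary; the identification at $u_0$ is the obstruction.)

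For contrast, the paper avoids Theorem \ref{blackcycle} entirely for petal graphs. It embeds the loop petal as $v_0v_0$ and then partitions the remaining edges of $K_k^*$ into three subgraphs $H_0$, $H_1$, $H_2$ with $|E(H_1)|=c_2$, $|E(H_2)|=c_3$, all degrees even, $H_1$ and $H_2$ connected (after deleting isolated vertices) with positive degree at $v_0$; each of the two large petals is then traced as an Eulerian circuit of $H_1$ or $H_2$ based at $v_0$. This Eulerian-decomposition viewpoint makes the double visit to $v_0$ automatic rather than something to be forced, and the slack $h=\binom{k+1}{2}-1-c_2-c_3$ is absorbed by the discarded subgraph $H_0$. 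If you want to salvage your outline, you would need either a strengthening of Theorem \ref{blackcycle} that permits prescribing an identification of two cycle vertices, or a switch to an argument of the paper's Eulerian type.
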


\begin{proof}
The only if direction is trivial. For the if direction, let $n=c_2+c_3$. If $k\leq3$, then $n\leq\binom{3+1}{2}-1=5$, contradicting that $c_2\geq3$. Hence, it is only meaningful to consider $k\geq5$. In all of the following cases, we embed the first petal $u_0u_0$ of $P_{1,c_2,c_3}$ as the loop $v_0v_0$ in $K_k^*$.

Without loss of generality, we can assume that $n>\binom{k-1}{2}-1$. Otherwise, if $n\leq\binom{k-1}{2}-1$, we can consider embedding $P_{1,c_2,c_3}$ in $K_{k-2}^*$, which can be further embedded in $K_k^*$ trivially. Let $h=\binom{k+1}{2}-1-n$ be the gap between $n$ and $\binom{k+1}{2}-1$. Based on our assumption, $0\leq h\leq\binom{k+1}{2}-1-\binom{k-1}{2}=2k-2$.

 To embed $P_{1,c_2,c_3}$ in $K_k^*$, after embedding the petal $u_0u_0$ as the loop $v_0v_0$ in $K_k^*$, it suffices to partition the edges of $K_k^*-v_0v_0$ into three subgraphs $H_0$, $H_1$, and $H_2$ that satisfy all the following conditions.
\begin{enumerate}
\item The number of edges in $H_0$, $H_1$, and $H_2$ are $h$, $c_2$, and $c_3$ respectively.
\item\label{H1H2connected} After removing isolated vertices, $H_1$ and $H_2$ are connected.
\item\label{H1H2posdeg} The degree of $v_0$ in each of $H_1$ and $H_2$ is positive.
\item\label{H0H1H2evendeg} All degrees in $H_0$, $H_1$, and $H_2$ are even.
\end{enumerate}
This is because the petals $u_0u^2_1u^2_2\dotsb u^2_{c_2-1}u_0$ and $u_0u^3_1u^3_2\dotsb u^3_{c_3-1}u_0$ can then be embedded as $H_1$ and $H_2$ respectively, since $H_1$ and $H_2$ are Eulerian by conditions \ref{H1H2connected} and \ref{H0H1H2evendeg}, and $u_0$ can be embedded as $v_0$ by condition \ref{H1H2posdeg}.

\begin{center}\textbf{Construction of $H_1$}\end{center}

Let $\ell$ be the unique odd integer such that $\binom{\ell-1}{2}-1<c_2\leq\binom{\ell+1}{2}-1$. Note that $\ell\geq3$. Let $g=\binom{\ell+1}{2}-1-c_2$ be the gap between $c_2$ and $\binom{\ell+1}{2}-1$, which satisfies $0\leq g\leq\binom{\ell+1}{2}-1-\binom{\ell-1}{2}=2\ell-2$. Let $S$ be the set of $g$ edges in $K_k^*$ defined as
$$S=\begin{cases}
\{v_1v_1,v_2v_2,\dotsc,v_gv_g\}&\text{if }0\leq g\leq\ell-1;\text{ and}\\
\{v_0v_1v_2\dotsb v_{\ell-1} v_0,v_1v_1,v_2v_2,\dotsc,v_{g-\ell}v_{g-\ell}\}&\text{if }\ell\leq g\leq 2\ell-2.
\end{cases}$$
Let $V_1=\{v_0,v_1,v_2,\dotsc,v_{\ell-1}\}$ and $V_2=\{v_\ell,v_{\ell+1},\dotsc,v_{k-1}\}$.  Let $L_1$ and $L_2$ be the induced subgraphs of $K_k^*$ on $V_1$ and $V_2$ respectively, and let $L_b$ be the complete bipartite graph between $V_1$ and $V_2$.

Define the subgraph $H_1$ as $L_1-v_0v_0-S$. It is clear from the construction that the following conditions hold.
\begin{enumerate}
\item The number of edges in $H_1$ is $c_2$.
\item After removing $V_2$, $H_1$ is connected since
\begin{itemize}
\item if $\ell=3$, then $g=\binom{\ell+1}{2}-1-c_2=5-c_2\leq5-3=2$, so $H_1$ contains the cycle $v_0v_1v_2v_0$; and
\item if $\ell>3$, then $H_1$ contains the cycle $v_0v_2v_4\dotsb v_{\ell-1}v_1v_3\dotsb v_{\ell-2}v_0$.
\end{itemize}
\item The degree of $v_0$ is positive since $H_1$ is connected after $V_2$ is removed.
\item All degrees in $H_1$ are even.
\end{enumerate}

\begin{center}\textbf{Construction of $H_0$}\end{center}

Let $g'$ be the number of loops in $S$. If $h\leq g'$, then let $H_0$ be the subgraph that contains $h$ loops in $S$. Otherwise, we have the following claim.

\underline{\textit{Claim $1$}}: If $h>g'$, then $k>\ell$.

\underline{\textit{Proof of Claim $1$}}: If $k=\ell$, then $\binom{k-1}{2}\leq c_2\leq c_3$, thus $0\leq h=\binom{k+1}{2}-1-n\leq\binom{k+1}{2}-1-2c_2\leq\binom{k+1}{2}-1-2\binom{k-1}{2}=\frac{-(k-6)(k-1)}{2}$, which implies $k=5$ and $h\leq2$. Note that $\binom{5-1}{2}\leq c_2\leq\frac{1}{2}\left(\binom{5+1}{2}-1\right)$, so $c_2=6$ or $7$. Hence, $g=\binom{5+1}{2}-1-c_2=8$ or $7$, meaning that $S$ has $8$ or $7$ edges. As a result, the number of loops in $S$, denoted by $g'$, is $3$ or $2$. However, $h\leq2$, contradicting the condition that $h>g'$.

Under the assumption that $h>g'$, we have $k-\ell\geq2$. Index the vertices in $V_1-\{v_0\}$ in pairs as $v_{2s-1}$ and $v_{2s}$, where $1\leq s\leq\frac{\ell-1}{2}$, and index the vertices in $V_2$ in pairs as $v_{\ell+2t-2}$ and $v_{\ell+2t-1}$, where $1\leq t\leq\frac{k-\ell}{2}$. Since $\ell\geq3$, the number of edges in the complete bipartite graph between $V_1-\{v_0\}$ and $V_2$ is $(\ell-1)(k-\ell)\geq2(k-3)=2k-6$. Let $h'=h-g'$. We are going to define $H_0$ based on the following cases.

Case $1$: $k-\ell\geq4$. Let $\widetilde{h}$ be the smallest positive integer such that $h'\mod{\widetilde{h}}{4}$. Note that $h'-\widetilde{h}<h'\leq h\leq2k-2$. Together with the fact that $h'-\widetilde{h}\equiv2k-2\mod{0}{4}$, we have $h'-\widetilde{h}\leq2k-6$.

Define the subgraph $H_0$ as the disjoint union of 
\begin{itemize}
\item $\frac{h'-\widetilde{h}}{4}$ copies of $4$-cycles $v_{2s-1}v_{\ell+2t-2}v_{2s}v_{\ell+2t-1}v_{2s-1}$,
\item $g'$ loops in $S$, and
\item $\widetilde{h}$ loops in $L_2$.
\end{itemize}
The number of edges in $H_0$ is $\frac{h'-\widetilde{h}}{4}\cdot4+g'+\widetilde{h}=h'+g'=h$, and it is clear that all degrees in $H_0$ are even.

Case $2$: $k-\ell=2$, i.e., $\ell=k-2$. The number of edges in $K_k^*-v_0v_0-H_0$ is at least $2c_2$, so $h'\leq h\leq\binom{k+1}{2}-1-2c_2\leq\binom{k+1}{2}-1-2\binom{\ell-1}{2}=\binom{k+1}{2}-1-2\binom{k-3}{2}=\frac{1}{2}(-k^2+15k-26)$, in addition to the usual bound $h'\leq h\leq2k-2$. In order for $\frac{1}{2}(-k^2+15k-26)$ to be nonnegative, we must have $k\leq13$. Here is a list of values of $k\leq13$ and the corresponding upper bounds on $h$.
\begin{center}
\begin{tabular}{|c|c|c|c|c|c|}
\hline
$k$& 5& 7& 9& 11& 13\\
\hline
$h\leq$& 8& 12& 14& 9& 0\\
\hline
\end{tabular}
\end{center}

\underline{\textit{Claim $2$}}: If $k\leq13$, then $h'\leq2k-4$.

\underline{\textit{Proof of Claim $2$}}: Since $h'\leq h$, the only possible cases for $h'>2k-4$ are when $(k,h)=(5,7)$, $(5,8)$, $(7,11)$, and $(7,12)$. Note that $\max\left\{3,\binom{k-3}{2}\right\}=\max\left\{3,\binom{\ell-1}{2}\right\}\leq c_2\leq\frac{1}{2}\left(\binom{k+1}{2}-1-h\right)$. When $k=5$ and $h=7$ or $8$, we have $c_2=3$, so $H_1$ is the cycle $v_0v_1v_2v_0$, and $g'=2$. Hence, $h'=h-g'=5$ or $6$. When $k=7$, we have 
$$c_2=\begin{cases}
6,\ 7,\text{ or }8&\text{if }h=11;\text{ and}\\
6\text{ or }7&\text{if }h=12.
\end{cases}$$
As a result, $H_1$ is the union of the cycle $v_0v_2v_4v_1v_3v_0$ and $c_2-5$ loops in $L_1-v_0v_0$. Hence, 
$$g'=\begin{cases}
3,\ 2,\text{ or }1&\text{if }h=11;\text{ and}\\
3\text{ or }2&\text{if }h=12.
\end{cases}$$
This implies 
$$h'=h-g'=\begin{cases}
8,\ 9,\text{ or }10&\text{if }h=11;\text{ and}\\
9\text{ or }10&\text{if }h=12.
\end{cases}$$
In all cases, $h'\leq2k-4$. This completes the proof of Claim~$2$.

Next, let $\widetilde{h}$ be the smallest nonnegative integer such that $h'\mod{\widetilde{h}}{4}$, which is slightly different from Case 1. Note that Claim~$2$, together with the fact that $h'-\widetilde{h}\equiv2k-2\mod{0}{4}$, implies $h'-\widetilde{h}\leq2k-6$. If $\widetilde{h}\leq 2$, define the subgraph $H_0$ as the union of
\begin{itemize}
\item $4$-cycles $v_{2s-1}v_{k-2}v_{2s}v_{k-1}v_{2s-1}$, where $1\leq s\leq\frac{h'-\widetilde{h}}{4}$,
\item $g'$ loops in $S$, and
\item $\widetilde{h}$ loops in $L_2$.
\end{itemize}
If $\widetilde{h}=3$, define the subgraph $H_0$ as the union of
\begin{itemize}
\item $4$-cycles $v_{2s-1}v_{k-2}v_{2s}v_{k-1}v_{2s-1}$, where $1\leq s\leq\frac{h'-\widetilde{h}}{4}$,
\item $g'$ loops in $S$, and
\item the triangle $v_{k-3}v_{k-2}v_{k-1}v_{k-3}$.
\end{itemize}
The number of edges in $H_0$ is $4\cdot\frac{h'-\widetilde{h}}{4}+g'+\widetilde{h}=h'+g'=h$, and it is clear that all degrees in $H_0$ are even.

\begin{center}\textbf{Construction of $H_2$}\end{center}

Define the subgraph $H_2$ as $K_k^*-v_0v_0-H_0-H_1$. From the construction, the following conditions hold.
\begin{enumerate}
\item The number of edges in $H_2$ is $\binom{k+1}{2}-1-h-c_2=n-c_2=c_3$.
\item\label{item:H2connected} After removing isolated vertices in $V_1$, $H_2$ is connected due to the following analysis.
\begin{itemize}
\item If $h\leq g'$ and $k=\ell$, then as provided in the proof of Claim $1$, $k=5$ and $S$ contains the cycle $v_0v_1v_2v_3v_4v_0$, which is completely contained in $H_2$ since $H_0$ does not contain any edges in this cycle.
\item If $h\leq g'$ and $k>\ell$, then $H_2$ contains both $L_2$ and $L_b$.
\item If $h>g'$, then by Claim $1$, $k>\ell$. Hence, in $H_2$, $v_0$ is connected with all vertices in $V_2$, and vertices $v_1,v_2,\dotsc,v_{\ell-1}$ are either isolated vertices and thus removed, or are connected to $v_0$ through the cycle $v_0v_1v_2\dotsb v_{\ell-1}v_0$. 
\end{itemize}
\item The degree of $v_0$ in $H_2$ is positive due to item \ref{item:H2connected}.
\item All degrees in $H_2$ are even since all degrees in $K_k^*$, $v_0v_0$, $H_0$, and $H_1$ are even.
\end{enumerate}
\end{proof}

\subsection{Cycles with one chord}

Caterpillar trees are natural extensions of paths, while chorded cycles are natural extensions of cycles. In this section, we are going to study cycles with one extra chord.

\begin{definition}\label{chordedcycle}
Let $C_n^{\{0,j\}}$ be a cycle on $n$ vertices $w_0,w_1,w_2,\dotsc,w_{n-1}$ with a chord between $w_0$ and $w_j$. Without loss of generality, let $2\leq j\leq\frac{n}{2}$, which also implies $n\geq4$. Note that $j\neq1$, as $w_0w_1$ is already in the cycle $C_n$.
\end{definition}

Note that $C_n^{\{0,2\}}$ contains a triangle $w_0w_1w_2w_0$. When it is embedded in $K_k^*$, this triangle creates an obstruction if $n$ is very close to $\binom{k+1}{2}$. This is reflected by the extra complication in the statement of the following theorem.

\begin{theorem}\label{OddChordedCycle}
Let $k$ be odd. Then $C_n^{\{0,2\}}$ can be embedded in $K_k^*$ if and only if $n\leq\binom{k+1}{2}-3$. When $j\geq3$, then $C_n^{\{0,j\}}$ can be embedded in $K_k^*$ if and only if $n\leq\binom{k+1}{2}-1$. 
\end{theorem}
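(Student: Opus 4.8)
The plan is to split the argument according to whether $j\ge3$ or $j=2$, since only the latter feels the triangle obstruction, and in each case to pass through Theorem \ref{embedding}. For $j\ge3$ I would reduce the problem to the petal graph result. The necessary condition $n\le\binom{k+1}{2}-1$ is immediate from edge-counting: $C_n^{\{0,j\}}$ has $n+1$ edges while $K_k^*$ has $\binom{k+1}{2}$. For sufficiency I would exploit that a graph homomorphism need not be injective on vertices. Given an embedding of the petal graph $P_{1,j,\,n-j}$ in $K_k^*$ — which exists by Theorem \ref{petalembedoddk} precisely when $j=c_2\ge3$ and $1+j+(n-j)=n+1\le\binom{k+1}{2}$, using $j\le n/2$ to guarantee $c_2\le c_3$ — I build an embedding of $C_n^{\{0,j\}}$ by sending both $w_0$ and $w_j$ to the image $v_0$ of the hub $u_0$, routing the path $w_0w_1\cdots w_j$ along the $j$-petal, the path $w_jw_{j+1}\cdots w_{n-1}w_0$ along the $(n-j)$-petal, and the chord $w_0w_j$ onto the loop $v_0v_0$. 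The three groups of edges come from three distinct petals of an honest embedding, so they stay pairwise distinct and the map is a valid embedding.

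For $j=2$ this reduction fails, because identifying $w_0$ and $w_2$ would collapse the triangle $w_0w_1w_2$ into a forbidden digon (exactly why Theorem \ref{petalembedoddk} demands $c_2\ge3$), so I treat both directions directly. For necessity — the point being the extra $-3$ — first note the triangle forces $\phi(w_0)\ne\phi(w_2)$, since otherwise $w_0w_1$ and $w_1w_2$ would share an image edge. The chorded cycle has an Eulerian trail from $w_0$ to $w_2$ (traverse the $n$-cycle, finish along the chord), so the image subgraph $H\subseteq K_k^*$, which has exactly $n+1$ edges as $\phi$ is injective on edges, admits an Eulerian trail between the distinct vertices $\phi(w_0),\phi(w_2)$; hence these are its only two odd-degree vertices. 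Because $k$ is odd, every vertex of $K_k^*$ has even degree $k+1$, so the complement $H^c=K_k^*\setminus E(H)$ has the same two odd vertices $\phi(w_0),\phi(w_2)$ and $\binom{k+1}{2}-(n+1)$ edges. If $n=\binom{k+1}{2}-1$ then $H^c$ is empty and has no odd vertex; if $n=\binom{k+1}{2}-2$ then $H^c$ is a single non-loop edge, forcing $H^c=\{\phi(w_0)\phi(w_2)\}$, but that edge is the image of the chord and so lies in $H$. Both are contradictions, giving $n\le\binom{k+1}{2}-3$.

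For sufficiency I would fix the triangle by $\phi(w_0)=v_0,\ \phi(w_1)=v_1,\ \phi(w_2)=v_2$, which uses the edges $v_0v_1,v_1v_2,v_0v_2$ and realizes the chord as $v_0v_2$, and then route the path $w_2w_3\cdots w_{n-1}w_0$ as a trail of length exactly $n-2$ from $v_2$ to $v_0$ inside $G'=K_k^*-\{v_0v_1,v_1v_2,v_0v_2\}$. Deleting the three triangle edges lowers $\deg(v_0),\deg(v_1),\deg(v_2)$ each by $2$, so $G'$ stays connected with all degrees even; deleting a further length-$2$ path such as $v_0v_4,v_4v_2$ (available for $k\ge5$) leaves exactly $v_0,v_2$ odd, giving an Eulerian $v_2$–$v_0$ trail of length $\binom{k+1}{2}-5$, the longest length required. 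To hit every shorter length down to $n-2=2$ (the hypothesis $n\ge4$ makes $n-2\ge2$), I would shorten this maximal trail in controlled steps, splicing out loops $v_iv_i$ to drop the length by $1$ and short cycles to drop it by $2$ or $3$, with the abundance of loops and triangles on the $\ge5$ vertices supplying both parities.

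I expect the main obstacle to be precisely this last step: producing a $v_2$–$v_0$ trail of the exact prescribed length $n-2$ for every admissible $n$, rather than merely some trail or the Eulerian one. The even-degree structure of $G'$ guarantees trails exist and fixes their endpoints, but landing on each length on the nose — controlling parity via single loops while keeping the trail connected and edge-simple — is the delicate book-keeping, and is where the smallest cases (notably $k=5$) will likely need to be verified by hand. Once all the embedding statements are in place, Theorem \ref{embedding} converts them directly into the EDCN consequences used later.
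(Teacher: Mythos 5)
Your reduction of the $j\geq3$ case to the petal graph $P_{1,j,n-j}$ via Theorem \ref{petalembedoddk} is exactly the paper's argument, and your necessity proof for $j=2$ is a correct (and cleanly packaged) version of the paper's parity count: the paper phrases it as ``at least two edges of $K_k^*-v_0v_1v_2v_0$ must be forgone, and a single forgone edge would have to be the nonexistent $v_0v_2$,'' which is the same as your observation that the one- or zero-edge complement cannot carry odd degrees at $\phi(w_0)$ and $\phi(w_2)$.

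The genuine gap is in the sufficiency direction for $j=2$, and it is the one you yourself flag: you never actually produce, for each admissible $n$, a set of $\binom{k+1}{2}-1-n$ edges whose removal from $K_k^*-v_0v_1v_2v_0$ leaves a connected graph with odd degrees exactly at $v_0$ and $v_2$. ``Splicing out loops and short cycles'' is the right idea, but as stated your deficit can be as large as $\binom{k+1}{2}-7$ (when $n=4$), which is far more than the $k$ available loops and forces you to delete a large fraction of the graph while preserving connectivity -- this is not routine book-keeping. The paper's first move, which you omit, is the normalization: if $n\leq\binom{k-1}{2}-3$, embed in $K_{k-2}^*$ instead, so that one may assume the deficit $h=\binom{k+1}{2}-3-n$ satisfies $0\leq h\leq 2k-2$. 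With that bound in hand the paper writes down an explicit deleted set $H$ of $h+2$ edges (a $2$-path from $v_0$ to $v_2$ plus loops for $h\leq k$; an extra Hamiltonian-type cycle on $\{v_1,v_3,v_4,\dotsc,v_{k-1}\}$ for $k\geq7$ and $k+1\leq h\leq 2k-2$; and a bespoke configuration for $k=5$ with $6\leq h\leq 8$, confirming your suspicion that $k=5$ needs special treatment). Without the normalization and these explicit constructions your proof does not close; with them, it becomes the paper's proof.
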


\begin{proof}
If $k\leq3$, then it is obvious that $C_n^{\{0,j\}}$ cannot be embedded in $K_k^*$. Also, $k\leq3$ is ruled out implicitly by $4\leq n\leq\binom{k+1}{2}-3$ as well as $6\leq2j\leq n\leq\binom{k+1}{2}-1$. Hence, it is only meaningful to consider $k\geq5$.

If $C_n^{\{0,2\}}$ can be embedded in $K_k^*$, then note that the vertices in the triangle $w_0w_1w_2w_0$ must be embedded as distinct vertices in $K_k^*$. Otherwise, we will create double edges or multiple loops at the same vertex, which do not exist in $K_k^*$. Without loss of generality, assume that $w_0$, $w_1$, $w_2$ are embedded as $v_0$, $v_1$, $v_2$ respectively. The degrees of $w_0$ and $w_2$ in the path $w_2w_3\dotsb w_{n-1}w_0$ are odd, while the degrees of $v_0$ and $v_2$ in $K_k^*-v_0v_1v_2v_0$ are even. To create odd degree vertices at $v_0$ and $v_2$ in $K_k^*-v_0v_1v_2v_0$, if we were to forgo only one edge from $K_k^*-v_0v_1v_2v_0$, that edge must be $v_0v_2$. However, $v_0v_2$ does not exist in $K_k^*-v_0v_1v_2v_0$, so at least two edges are forgone from $K_k^*-v_0v_1v_2v_0$ when the path $w_2w_3\dotsb w_{n-1}w_0$ is embedded in $K_k^*-v_0v_1v_2v_0$. Therefore, $n+1\leq\binom{k+1}{2}-2$, or $n\leq\binom{k+1}{2}-3$.

If $n\leq\binom{k+1}{2}-3$, without loss of generality, we can assume that $n>\binom{k-1}{2}-3$. Otherwise, if $n\leq\binom{k-1}{2}-3$, we can consider embedding $C_n^{\{0,2\}}$ in $K_{k-2}^*$, which can be further embedded in $K_k^*$ trivially. Let $h=\binom{k+1}{2}-3-n$ be the gap between $n$ and $\binom{k+1}{2}-3$. Based on our assumption, $0\leq h\leq\binom{k+1}{2}-3-\left(\binom{k-1}{2}-2\right)=2k-2$. Let $H$ be defined such that 
$$H=\begin{cases}
\{v_0v_3v_2,v_0v_0,v_1v_1,v_2v_2,\dotsc,v_{h-1}v_{h-1}\}&\text{if }0\leq h\leq k;\\
\{v_0v_4v_1v_3v_4v_4v_2,v_0v_0,v_1v_1,\dotsc,v_{h-5}v_{h-5}\}&\text{if }k=5\text{ and }6\leq h\leq8;\text{ and}\\
\{v_0v_3v_2,v_{k-1}v_1v_3v_4v_5\dotsb v_{k-1},\\
\hspace{30pt}v_0v_0,v_1v_1,v_2v_2,\dotsc,v_{h-k+1}v_{h-k+1}\}&\text{if }k\geq7\text{ and }k+1\leq h\leq2k-2.
\end{cases}$$
Note that the number of edges in $H$ is $h+2$ in all cases. Now, the chorded cycle $C_n^{\{0,2\}}$ can be embedded in $K_k^*$ such that $w_0$, $w_1$, $w_2$ are embedded as $v_0$, $v_1$, and $v_2$ respectively, and the path $w_2w_3\dotsb w_{n-1}w_0$ is embedded as the Eulerian path in $K_k^*-v_0v_1v_2v_0-H$. This is because the only odd degree vertices in the graph $K_k^*-v_0v_1v_2v_0-H$ are $v_0$ and $v_2$, and this graph is connected after isolated vertices are removed.

When $j\geq3$, the only if direction is trivial. For the if direction, the chorded cycle $C_n^{\{0,j\}}$ can be embedded in the petal graph $P_{1,j,n-j}$ as follows: the chord $w_0w_j$ is embedded as the first petal $u_0u_0$, the path $w_0w_1w_2\dotsb w_j$ is embedded as the second petal $u_0u^2_1u^2_2\dotsb u^2_{j-1}u_0$, and the path $w_jw_{j+1}\dotsb w_{n-1}w_0$ is embedded as the third petal $u_0u^3_1u^3_2\dotsb u^3_{n-j-1}u_0$. Theorem \ref{petalembedoddk} completes the proof.
\end{proof}

\subsection{Spider graphs with three or four legs}

We have provided the definition of spider graphs in Section \ref{sec:intro}, but it is more convenient for our future discussions if we formalize notation.

\begin{definition}\label{defspider}
Let $\Delta\in\N$ such that $\Delta\geq3$, and let $\ell_1,\ell_2,\dotsc,\ell_\Delta\in\N$ such that $\ell_1\leq\ell_2\leq\dotsc\leq\ell_\Delta$. Let $S_{\ell_1,\ell_2,\dotsc,\ell_\Delta}$ be the \emph{spider graph} with vertices $x_0,x^1_1,x^1_2,\dotsc,x^1_{\ell_1},x^2_1,x^2_2,\dotsc,x^2_{\ell_2},\linebreak \dotsc,x^\Delta_1,x^\Delta_2,\dotsc,x^\Delta_{\ell_\Delta}$, and for each $i=1,2,\dotsc,\Delta$, the edges form a path $x_0x^i_1,x^i_2,\dotsc,x^i_{\ell_i}$ between the \emph{central vertex} $x_0$ and the leaf $x^i_{\ell_i}$, which is called the \emph{$i$-th leg} of the spider graph.
\end{definition}

Although the necessary and sufficient conditions for embedding spider graphs with three legs in $K_k^*$ were proved \cite{fw}, we are going to provide a much simplified proof when $k$ is odd, utilizing Theorem \ref{petalembedoddk}.

\begin{theorem}
Let $n=\ell_1+\ell_2+\ell_3$, and let $k$ be odd. If $n\geq7$, then $S_{\ell_1,\ell_2,\ell_3}$ can be embedded in $K_k^*$ if and only if $n\leq\binom{k+1}{2}$.
\end{theorem}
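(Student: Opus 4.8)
The necessity is immediate: $S_{\ell_1,\ell_2,\ell_3}$ has exactly $n$ edges while $K_k^*$ has $\binom{k+1}{2}$, so any embedding forces $n\le\binom{k+1}{2}$. For sufficiency, the plan is to compose two edge-injective homomorphisms $S_{\ell_1,\ell_2,\ell_3}\to P_{1,c_2,c_3}\to K_k^*$, where the second arrow is supplied by Theorem \ref{petalembedoddk}. Since a composition of edge-injective graph homomorphisms is again edge-injective, it suffices to fold the spider onto a petal graph whose two nontrivial petals have length at least $3$ and whose total number of edges is exactly $n$, and then to invoke Theorem \ref{petalembedoddk} with $1+c_2+c_3=n\le\binom{k+1}{2}$.

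The key folding device is that a path may wrap all the way around a petal: a path of length $c$ emanating from the hub embeds edge-injectively onto a petal (cycle) of length $c$ by sending its far endpoint back to the hub, and likewise two legs meeting at the hub of total length $c$ embed onto a petal of length $c$ with their leaves identified. Using this, when $\ell_1+\ell_2\ge4$ I would set $\phi(x_0)=u_0$, send the first edge of the shortest leg to the loop $u_0u_0$ (the first petal), wrap the remaining $\ell_1-1$ edges of that leg together with all of the second leg around a petal of length $c_2=\ell_1+\ell_2-1$, and wrap the third leg around a petal of length $c_3=\ell_3$. Then $c_3=\ell_3\ge3$ and $c_2=\ell_1+\ell_2-1\ge3$ hold automatically (the hypothesis $n\ge7$ guarantees $\ell_3\ge3$), the hub $u_0$ receives exactly the five edges incident to it in $P_{1,c_2,c_3}$, and the total edge count is $1+c_2+c_3=n$. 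Theorem \ref{petalembedoddk} then settles this case.

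The remaining cases are $\ell_1+\ell_2\le3$, namely the families $S_{1,1,\ell_3}$ and $S_{1,2,\ell_3}$; these are precisely the spiders whose two shortest legs are too short to fill a second petal of length $3$, so they cannot be folded onto a three-petal graph using all $n$ edges, and they constitute the main obstacle. For these I would embed directly into $K_k^*$: send one pendant edge to the loop $u_0u_0$, and embed the path formed by the union of the other two legs---which in the extremal case $n=\binom{k+1}{2}$ has length exactly $\binom{k+1}{2}-1$---onto $K_k^*-u_0u_0$ by wrapping it around an Eulerian circuit through $u_0$, placing the central vertex at the start of the circuit. Here the parity hypothesis is essential: because $k$ is odd, every vertex of $K_k^*-u_0u_0$ has even degree ($k+1$ for the non-hub vertices and $k-1$ for $u_0$), so this graph is connected with all even degrees and hence Eulerian. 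For non-extremal $n$ in these two families, $S_{1,1,\ell_3}$ and $S_{1,2,\ell_3}$ are subgraphs of their own extremal instances obtained by lengthening the longest leg, so Proposition \ref{subgraph} reduces them to the case just handled.

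The step I expect to require the most care is these exceptional families: verifying that the wrap-around onto the Eulerian circuit is genuinely edge-injective and that the central vertex can be placed at the hub, and confirming that no analogous parity obstruction---of the sort that forces the extra $-3$ correction for $C_n^{\{0,2\}}$ in Theorem \ref{OddChordedCycle}---intervenes here. Everywhere else the argument reduces to the bookkeeping check that the folded petal lengths are at least $3$ and sum, together with the loop, to exactly $n$.
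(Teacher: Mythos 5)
Your proof is correct, and in the main case it coincides with the paper's: for $\ell_1\geq2$ the paper likewise sends the first edge of the shortest leg to the loop $u_0u_0$, folds the remainder of legs one and two onto a petal of length $\ell_1+\ell_2-1$ with their two leaves identified, wraps leg three around a petal of length $\ell_3$, and invokes Theorem \ref{petalembedoddk}. Where you genuinely diverge is in the spiders with two very short legs. The paper uses one uniform construction for all of $\ell_1=1$: it embeds $S_{1,\ell_2,\ell_3}$ into $P_{1,3,n-4}$ by collapsing the pendant leg onto the loop, laying leg two along the long petal, and routing leg three all the way around the length-$3$ petal and then back along the long petal from the opposite end until it meets the leaf of leg two; the two legs share the long petal, which is why $\ell_2\in\{1,2\}$ causes no trouble. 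You instead isolate $S_{1,1,\ell_3}$ and $S_{1,2,\ell_3}$ and embed them directly, putting the pendant edge on the loop $v_0v_0$ and threading the union of the other two legs through an Eulerian circuit of $K_k^*-v_0v_0$ based at $v_0$ (all degrees even precisely because $k$ is odd), with the long leg taking the first $\ell_3$ edges of the circuit and the short leg the last $\ell_2$ edges traversed backwards; Proposition \ref{subgraph} then handles non-extremal $n$. This is sound --- your worry about a parity obstruction of the $C_n^{\{0,2\}}$ type is unfounded here, since the loop absorbs the pendant edge and leaves an Eulerian graph with no forced triangle. The trade-off is that the paper's route stays entirely inside the petal-graph framework at the price of a slightly more intricate double-occupancy of one petal, while yours adds one extra, but more elementary and self-contained, case.
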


\begin{proof}
If $k\leq3$, then $n\leq\binom{3+1}{2}=6$, violating the condition that $n\geq7$. Hence, it is only meaningful to consider $k\geq5$. Furthermore, $n\geq7$ implies $\ell_3\geq\left\lceil\frac{1}{3}n\right\rceil\geq3$.

The only if direction is trivial. For the if direction, we consider the following cases.

Case $1$: $\ell_1=1$. Hence, the spider graph $S_{1,\ell_2,\ell_3}$ can be embedded in the petal graph $P_{1,3,n-4}$ as follows: the vertices $x_0$ and $x^1_1$ of the first leg are both embedded as $u_0$; the vertices $x^2_1,x^2_2,\dotsc,x^2_{\ell_2}$ of the second leg are embedded as $u^3_1,u^3_2,\dotsc,u^3_{\ell_2}$; and the vertices $x^3_1,x^3_2,\dotsc,x^3_{\ell_3}$ of the third leg are embedded as $u^2_1,u^2_2,u_0,u^3_{n-5},u^3_{n-6},\dotsc,u^3_{\ell_2}$. Theorem \ref{petalembedoddk} completes the proof.

Case $2$: $\ell_1\geq2$. Let $(i,j)=(2,3)$ if $\ell_1+\ell_2-1\leq\ell_3$, and let $(i,j)=(3,2)$ otherwise. The spider graph $S_{\ell_1,\ell_2,\ell_3}$ can be embedded in the petal graph $P_{1,c_2,c_3}$, where $c_i=\ell_1+\ell_2-1$ and $c_j=\ell_3$, as follows: the vertices $x_0,x^1_1,x^1_2,\dotsc,x^1_{\ell_1}$ of the first leg are embedded as $u_0,u_0,u^i_1,u^i_2,\dotsc,u^i_{\ell_1-1}$; the vertices $x^2_1,x^2_2,\dotsc,x^2_{\ell_2}$ of the second leg are embedded as $u^i_{c_i-1},u^i_{c_i-2},\dotsc,u^i_{\ell_1-1}$; and the vertices $x^3_1,x^3_2,\dotsc,x^3_{\ell_3}$ of the third leg are embedded as $u^j_1,u^j_2,\dotsc,u^j_{c_j-1},u_0$. Again, Theorem \ref{petalembedoddk} completes the proof.
\end{proof}

The last theorem of this section provides the necessary and sufficient conditions for embedding spider graphs with four legs, the family of graphs that originally drew our interest, in $K_k^*$ when $k$ is odd.

\begin{theorem}\label{spiderembedoddk}
Let $n=\ell_1+\ell_2+\ell_3+\ell_4$, and let $k$ be odd. If $n\geq7$, then $S_{\ell_1,\ell_2,\ell_3,\ell_4}$ can be embedded in $K_k^*$ if and only if
\begin{enumerate}[$(a)$]
\item $\ell_3=1$ and $n\leq\binom{k+1}{2}-1$, or
\item $\ell_3\geq2$ and $n\leq\binom{k+1}{2}$.
\end{enumerate}
\end{theorem}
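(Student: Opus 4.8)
The plan is to follow the strategy of the three-leg theorem and reduce both directions to the petal embedding result, Theorem~\ref{petalembedoddk}, by folding $S_{\ell_1,\ell_2,\ell_3,\ell_4}$ into a petal graph $P_{1,c_2,c_3}$. For the only-if direction, the inequality $n\leq\binom{k+1}{2}$ is immediate from comparing edge counts, which settles case $(b)$. The content is in case $(a)$: when $\ell_3=1$ the ordering forces $\ell_1=\ell_2=\ell_3=1$, so the spider is $S_{1,1,1,\ell_4}$ with three unit legs, and I would prove the sharper bound $n\leq\binom{k+1}{2}-1$ by a parity obstruction. Suppose $n=\binom{k+1}{2}$, so any embedding is edge-bijective; then for every vertex $v$ of $K_k^*$ one has $\deg_{K_k^*}(v)=\sum_{\phi(x)=v}\deg_S(x)$, which equals $k+1$ and hence is even. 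Thus each vertex receives an even number of odd-degree preimages. The odd-degree vertices of $S$ are its four leaves; the three unit-leg leaves sit on three distinct edges at $\phi(x_0)=v_0$ (two of these edges cannot coincide, and at most one can be the loop at $v_0$), so they land on three distinct vertices, each carrying a single leaf-preimage. Only the fourth (long-leg) leaf remains to repair parity, so at least two of those vertices keep an odd preimage count, contradicting evenness. Hence $n\leq\binom{k+1}{2}-1$.

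For the if-direction I would exhibit, in each case, a folding of the spider onto a $P_{1,c_2,c_3}$ whose two cycle-petals both have length at least $3$ and whose total edge count meets the stated bound, after which Theorem~\ref{petalembedoddk} finishes the job. Two elementary folding moves suffice: two legs of lengths $a,b$ leaving $x_0$ can be wound in opposite directions around a single petal of length $a+b$ (their leaves colliding at the antipodal vertex), and a leg may additionally absorb the loop by sending its first edge to $v_0v_0$, which shortens the petal it feeds by one. In case $(b)$, where $\ell_3,\ell_4\geq2$, I pair the legs into the two petals and let exactly one leg swallow the loop, so that the loop is used and the total is exactly $1+c_2+c_3=n$; choosing which pair absorbs the loop according to how many of $\ell_1,\ell_2$ equal $1$ keeps both petals $\geq3$. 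The finitely many spiders with $\ell_4=2$ (forcing $n\in\{7,8\}$) are the only ones where this tight folding would create a short petal, and for them I would instead use a looser folding with total $n+1$, which still fits since $k\geq5$ gives $\binom{k+1}{2}\geq15$.

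Case $(a)$ is handled by a single clean folding: send $S_{1,1,1,\ell_4}$ into $P_{1,3,\ell_4}$ by using the loop for the first unit leg, placing the other two unit legs on the two spokes $u_0u^2_1$ and $u_0u^2_2$ of the triangular petal (leaving its third edge $u^2_1u^2_2$ unused), and wrapping the long leg once around the petal of length $\ell_4$ so that its leaf returns to $u_0$. This uses $1+3+\ell_4=n+1$ edges of a petal graph with $c_2=3$, so Theorem~\ref{petalembedoddk} applies exactly when $n+1\leq\binom{k+1}{2}$, matching the bound coming from the parity obstruction. I expect the main obstacle to be twofold: making the parity argument in case $(a)$ airtight, since it is what produces the off-by-one and separates the two cases, and verifying in case $(b)$ that some valid pairing always keeps both petals at length $\geq3$ across every profile of leg lengths, which is precisely where the small-$n$ exceptions must be isolated and dispatched.
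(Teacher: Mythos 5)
Your proposal is correct and follows essentially the same route as the paper: the identical parity/degree obstruction gives the sharper bound $n\leq\binom{k+1}{2}-1$ when $\ell_3=1$, and the same two folding moves (winding a pair of legs in opposite directions around one petal, letting one leg of the other pair absorb the loop) reduce sufficiency to Theorem~\ref{petalembedoddk}. The only cosmetic differences are that in case $(a)$ the paper embeds $S_{1,1,1,\ell_4}$ directly into $K_k^*$ via the triangle $v_0v_1v_2v_0$ and an Euler circuit of the remainder instead of passing through $P_{1,3,\ell_4}$ with one triangle edge unused, and that in case $(b)$ the paper's fixed pairing $c_i=\ell_1+\ell_3$, $c_j=\ell_2+\ell_4-1$ already keeps both petals of length at least $3$ for every $n\geq7$, so your separate fallback for $\ell_4=2$ is unnecessary.
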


\begin{proof}
If $k\leq3$, then $n\leq\binom{3+1}{2}=6$, violating that $n\geq7$. Hence, it is only meaningful to consider $k\geq5$.

Assume $S_{\ell_1,\ell_2,\ell_3,\ell_4}$ can be embedded in $K_k^*$. It is trivial that $n\leq\binom{k+1}{2}$. If $n=\binom{k+1}{2}$, then since $x^1_{\ell_1}$, $x^2_{\ell_2}$, $x^3_{\ell_3}$, and $x^4_{\ell_4}$ are of odd degree in $S_{\ell_1,\ell_2,\ell_3,\ell_4}$ and all vertices in $K_k^*$ are of even degree, at least two of $x^1_{\ell_1}$, $x^2_{\ell_2}$, and $x^3_{\ell_3}$ must be embedded as the same vertex. If $\ell_3=1$, then this is impossible since there are no double edges in $K_k^*$, and hence $n\leq\binom{k+1}{2}-1$.

For the if direction, due to Proposition \ref{subgraph}, it suffices to show that $S_{\ell_1,\ell_2,\ell_3,\ell_4}$ can be embedded in $K_k^*$ if $n=\binom{k+1}{2}-1$ when $\ell_3=1$ and $n=\binom{k+1}{2}$ when $\ell_3\geq2$.

Case $1$: $\ell_3=1$. The spider graph $S_{1,1,1,\ell_4}$ can be embedded in $K_k^*$ as follows: the vertices $x_0$, $x^1_1$, $x^2_1$, and $x^3_1$ are embedded as $v_0$, $v_0$, $v_1$, and $v_2$ respectively; the remaining graph $K_k^*-v_0v_0-v_0v_1v_2v_0$ is an Eulerian graph, so the fourth leg can be embedded as an Eulerian cycle.

Case $2$: $\ell_3\geq2$. Let $(i,j)=(2,3)$ if $\ell_1+\ell_3\leq\ell_2+\ell_4-1$, and let $(i,j)=(3,2)$ otherwise. Define $c_i=\ell_1+\ell_3$ and $c_j=\ell_2+\ell_4-1$. Note that $c_i\geq1+2=3$ and $c_j\geq\max\{n-\ell_1-\ell_3-1,\ell_1+\ell_3-1\}\geq3$ since $n\geq7$. The spider graph $S_{\ell_1,\ell_2,\ell_3,\ell_4}$ can be embedded in the petal graph $P_{1,c_2,c_3}$, where $c_i=\ell_1+\ell_3$ and $c_j=\ell_2+\ell_4-1$ as follows: the vertices $x_0,x^1_1,x^1_2,\dotsc,x^1_{\ell_1}$ of the first leg are embedded as $u_0,u^i_1,u^i_2,\dotsc,u^i_{\ell_1}$; the vertices $x^2_1,x^2_2,\dotsc,x^2_{\ell_2}$ of the second leg are embedded as $u_0,u^j_1,u^j_2,\dotsc,u^j_{\ell_2-1}$; the vertices $x^3_1,x^3_2,\dotsc,x^3_{\ell_3}$ of the third leg are embedded as $u^i_{c_i-1},u^i_{c_i-2},\dotsc,u^i_{c_i-\ell_3}$; and the vertices $x^4_1,x^4_2,\dotsc,x^4_{\ell_4}$ of the fourth leg are embedded as $u^j_{c_j-1},u^j_{c_j-2},\dotsc,u^j_{c_j-\ell_4}$. Theorem \ref{petalembedoddk} completes the proof.
\end{proof}

\section{$k$ is even}\label{sec:keven}

In Section \ref{sec:kodd}, we restrict our consideration to $k$ being odd. In this section, except for caterpillar trees, we prove the results parallel to the theorems in the previous section for even $k$.

To aid our discussions in this section, here are some notations related to an edge decomposition of $K_k^*$. For each $j=0,1,2,\dotsc,\frac{k-2}{2}$, let $D_j$ be the subgraph of $K_k^*$ such that the edge set of $D_j$ is
$$\{v_pv_q:p\leq q,\text{ either }q-p=j\text{ or }q-p=k-j\}.$$
Note that each $D_j$ is a regular degree $2$ graph and has exactly $k$ edges. Furthermore, let $I$ be the perfect matching subgraph of $K_k^*$ such that the edge set of $I$ is
$$\left\{v_iv_{\frac{k}{2}+i}:i=0,1,2,\dotsc,\frac{k-2}{2}\right\}.$$
Figure \ref{figK6} illustrates how $K_6^*$ is decomposed into $D_0$, $D_1$, $D_2$, and $I$.
\begin{figure}[H]
\centering
\begin{tikzpicture}[scale=0.55]
{\draw[thick, black] (0,0) -- (2,0) -- (3,1.73) -- (2, 3.46) -- (0, 3.46) -- (-1, 1.73) -- (0,0);}
{\draw[thick, black] (0,0) -- (3,1.73) -- (0,3.46) -- (0,0);}
{\draw[thick, black] (2,0) -- (2,3.46) -- (-1,1.73) -- (2,0);}
{\draw[thick, black] (0,0) -- (2,3.46);}
{\draw[thick, black] (2,0) -- (0,3.46);}
{\draw[thick, black] (3,1.73) -- (-1,1.73);}
{\draw[thick, black](0,-0.3)circle(0.3);}
{\draw[thick, black](2,0-0.3)circle(0.3);}
{\draw[thick, black](3.3,1.73)circle(0.3);}
{\draw[thick, black](2,3.76)circle(0.3);}
{\draw[thick, black](0,3.76)circle(0.3);}
{\draw[thick, black](-1.3,1.73)circle(0.3);}
{\filldraw[black](0,0)circle(0.15);}
{\filldraw[orange](2,0)circle(0.15);}
{\filldraw[yellow](3,1.73)circle(0.15);}
{\filldraw[green](2,3.46)circle(0.15);}
{\filldraw[blue](0,3.46)circle(0.15);}
{\filldraw[red](-1,1.73)circle(0.15);}
{\node at (-2,1.73){$v_0$};}
{\node at (0,4.36){$v_5$};}
{\node at (2,4.36){$v_4$};}
{\node at (4,1.73){$v_3$};}
{\node at (2,-0.9){$v_2$};}
{\node at (0,-0.9){$v_1$};}
\end{tikzpicture}
\caption{$K_6^*$ contains $D_0$, $D_1$, and $D_2$ as shown below}\label{figK6}
\end{figure}
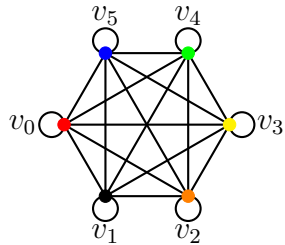
\begin{figure}[H]
\centering
\begin{minipage}{0.24\linewidth}
\centering
\begin{itemize}
\item $D_0$
\end{itemize}
\begin{tikzpicture}[scale=0.5]
{\draw[thick, black](0,-0.3)circle(0.3);}
{\draw[thick, black](2,0-0.3)circle(0.3);}
{\draw[thick, black](3.3,1.73)circle(0.3);}
{\draw[thick, black](2,3.76)circle(0.3);}
{\draw[thick, black](0,3.76)circle(0.3);}
{\draw[thick, black](-1.3,1.73)circle(0.3);}
{\filldraw[black](0,0)circle(0.15);}
{\filldraw[orange](2,0)circle(0.15);}
{\filldraw[yellow](3,1.73)circle(0.15);}
{\filldraw[green](2,3.46)circle(0.15);}
{\filldraw[blue](0,3.46)circle(0.15);}
{\filldraw[red](-1,1.73)circle(0.15);}
\end{tikzpicture}
\end{minipage}
\begin{minipage}{0.24\linewidth}
\centering
\begin{itemize}
\item $D_1$
\end{itemize}
\begin{tikzpicture}[scale=0.5]
{\draw[thick, black] (0,0) -- (2,0) -- (3,1.73) -- (2, 3.46) -- (0, 3.46) -- (-1, 1.73) -- (0,0);}
{\filldraw[black](0,0)circle(0.15);}
{\filldraw[orange](2,0)circle(0.15);}
{\filldraw[yellow](3,1.73)circle(0.15);}
{\filldraw[green](2,3.46)circle(0.15);}
{\filldraw[blue](0,3.46)circle(0.15);}
{\filldraw[red](-1,1.73)circle(0.15);}
\end{tikzpicture}
\end{minipage}
\begin{minipage}{0.24\linewidth}
\centering
\begin{itemize}
\item $D_2$
\end{itemize}
\begin{tikzpicture}[scale=0.5]
{\draw[thick, black] (0,0) -- (3,1.73) -- (0,3.46) -- (0,0);}
{\draw[thick, black] (2,0) -- (2,3.46) -- (-1,1.73) -- (2,0);}
{\filldraw[black](0,0)circle(0.15);}
{\filldraw[orange](2,0)circle(0.15);}
{\filldraw[yellow](3,1.73)circle(0.15);}
{\filldraw[green](2,3.46)circle(0.15);}
{\filldraw[blue](0,3.46)circle(0.15);}
{\filldraw[red](-1,1.73)circle(0.15);}
\end{tikzpicture}
\end{minipage}
\begin{minipage}{0.24\linewidth}
\centering
\begin{itemize}
\item $I$
\end{itemize}
\begin{tikzpicture}[scale=0.5]
\draw[thick, black] (0,0) -- (2,3.46);
\draw[thick, black] (3,1.73) -- (-1,1.73);
\draw[thick, black] (0,3.46) -- (2,0);
{\filldraw[black](0,0)circle(0.15);}
{\filldraw[orange](2,0)circle(0.15);}
{\filldraw[yellow](3,1.73)circle(0.15);}
{\filldraw[green](2,3.46)circle(0.15);}
{\filldraw[blue](0,3.46)circle(0.15);}
{\filldraw[red](-1,1.73)circle(0.15);}
\end{tikzpicture}
\end{minipage}
\end{figure}

\subsection{Petal graphs}

Similar to the case when $k$ is odd, we are going to show that petal graphs $P_{1,c_2,c_3}$ can be embedded in $K_k^*$ when $k$ is even as long as some trivial necessary conditions are satisfied.

\begin{theorem}\label{petalembedevenk}
If $k$ is even, then $P_{1,c_2,c_3}$ can be embedded in $K_k^*$ if and only if $k\geq6$, $c_2\geq3$, and $1+c_2+c_3\leq\frac{k^2}{2}$.
\end{theorem}

\begin{proof}
For the only if direction, the condition $c_2\geq3$ is trivial. If $k\leq4$, then the maximum degree in $K_k^*-I$ is at most $4$, which is less than the maximum degree in $P_{1,c_2,c_3}$. Hence, we must have $k\geq6$. Also, the petal graph $P_{1,c_2,c_3}$ has only even degrees, so when it is embedded in $K_k^*$, the image must also have only even degrees. Since all the $k$ vertices in $K_k^*$ have odd degrees, at least one edge will be missing from each vertex after embedding. In other words, there are at least $\frac{k}{2}$ edges missing, implying that the number of edges in $P_{1,c_2,c_3}$ is at most $\binom{k+1}{2}-\frac{k}{2}=\frac{k^2}{2}$.

For the if direction, let $n=c_2+c_3$. We are going to show that if $k\geq6$, $c_2\geq3$, and $1+n\leq\frac{k^2}{2}$, then $P_{1,c_2,c_3}$ can be embedded in $K_k^*-I$. In all of the following cases, we embed the petal $u_0u_0$ as the loop $v_0v_0$ in $K_k^*$.

Without loss of generality, we can assume that $n>\binom{k}{2}-1$. Otherwise, if $n\leq\binom{k}{2}-1$, we can embed $P_{1,c_2,c_3}$ in $K_{k-1}^*$ by Theorem \ref{petalembedoddk}, which can be further embedded in $K_k^*$ trivially. Let $h=\frac{k^2}{2}-1-n$ be the gap between $n$ and $\frac{k^2}{2}-1$. Based on our assumption, $0\leq h\leq\frac{k^2}{2}-1-\binom{k}{2}=\frac{k}{2}-1$.

Case $1$: $3\leq c_2<k$. Let $H_0$, $H_1$, and $H_2$ be subgraphs of $K_k^*-I$ such that $H_0=\{v_2v_2,v_3v_3,\dotsc,v_{h+1}v_{h+1}\}$, 
$$H_1=\begin{cases}
v_0v_1v_2v_3\dotsb v_{c_2-1}v_0&\text{if }c_2\neq\frac{k}{2}+1;\text{ and}\\
v_0v_1v_1v_2v_3\dotsb v_{\frac{k}{2}-1}v_0&\text{if }c_2=\frac{k}{2}+1,
\end{cases}$$
and $H_2=K_k^*-I-v_0v_0-H_0-H_1$. Note that $H_0$ is well-defined since $h\leq\frac{k}{2}-1\leq k-2$. With a simple count, we see that $H_0$ has $h$ edges, $H_1$ has $c_2$ edges, and $H_2$ has $\binom{k+1}{2}-\frac{k}{2}-1-h-c_2=n-c_2=c_3$ edges. It is clear that all vertices in $H_1$ and $H_2$ are of even degree. Furthermore, $H_2$ is connected since $H_2$ contains $D_2\cup\{v_{k-1}v_0\}-\{v_{c_2-1}v_0\}$. Therefore, $H_1$ and $H_2$ are both Eulerian, so we can embed the petals $u_0u^2_1u^2_2\dotsb u^2_{c_2-1}u_0$ as $H_1$ and $u_0u^3_1u^3_2\dotsb u^3_{c_3-1}u_0$ as $H_2$.

Case $2$: $c_2\geq k$ and $k=6$. Note that $c_2<\frac{1}{2}(1+n)\leq\frac{1}{2}\cdot\frac{k^2}{2}=9$, i.e., $c_2=6$, $7$, or $8$. We can embed the petal $u_0u^2_1u^2_2\dotsb u^2_{c_2-1}u_0$ as $v_0v_1v_3v_5v_4v_2v_0$ along with $c_2-6$ additional loops, and embed the petal $u_0u^3_1u^3_2\dotsb u^3_{c_3-1}u_0$ as $v_0v_5v_1v_2v_3v_4v_0$ along with $c_3-6$ additional loops. Since $c_2-6+c_3-6=n-12\leq5$, we can ensure that the two petals are embedded using different loops in $K_k^*-I-v_0v_0$.

Case $3$: $c_2\geq k$ and $k\geq8$. Let $z$ be the odd integer in $\{\frac{k-2}{2},\frac{k-4}{2}\}$. Note that $\gcd(k,z)=1$. Since $k\geq8$, $z\neq1$. In other words, $D_1$ and $D_z$ are two distinct Hamiltonian cycles in $K_k^*$. Let $c_2'$ and $c_3'$ be the smallest nonnegative integers such that $c_2\mod{c_2'}{k}$ and $c_3\mod{c_3'}{k}$.

Case $3.1$: $c_2'+c_3'<k$. We can embed the petal $u_0u^2_1u^2_2\dotsb u^2_{c_2-1}u_0$ as the subgraph composed of $D_1$, $\frac{c_2-c_2'}{k}-1$ elements of $\{D_2,D_3,\dotsc,D_{\frac{k-2}{2}}\}\setminus\{D_z\}$, and $c_2'$ loops in $D_0$. Next, we can embed the petal $u_0u^3_1u^3_2\dotsb u^3_{c_3-1}u_0$ as the subgraph composed of $D_z$, $\frac{c_3-c_3'}{k}-1$ elements of $\{D_2,D_3,\dotsc,D_{\frac{k-2}{2}}\}\setminus\{D_z\}$, and $c_3'$ loops in $D_0$.

Case $3.2$: $c_2'+c_3'\geq k$. Since $\frac{k^2}{2}-\frac{k}{2}\leq n\leq\frac{k^2}{2}-1$ and $n\mod{c_2'+c_3'}{k}$, we have $\frac{k}{2}\leq c_2'+c_3'-k\leq k-1$, which implies $\frac{3k}{2}\leq c_2'+c_3'\leq 2k-1$. Hence, $\min\{c_2',c_3'\}\geq\frac{3k}{2}-\max\{c_2',c_3'\}\geq\frac{3k}{2}-(k-1)>\frac{k}{2}$. Let $H_1'$ be the subgraph that contains $c_2'-\frac{k}{2}$ loops in $D_0$ together with the cycle $v_0v_2v_4\dotsb v_{k-2}v_0$ in $D_2$, and let $H_2'$ be the subgraph that contains $c_3'-\frac{k}{2}$ loops in $D_0$ together with the cycle $v_1v_3v_5\dotsb v_{k-1}v_1$ in $D_2$. Let $H_1$ be the subgraph that contains $D_1$, $\frac{c_2-c_2'}{k}-1$ elements of $\{D_3,\dotsc,D_{\frac{k-2}{2}}\}\setminus\{D_z\}$, and $H_1'$; let $H_2$ be the subgraph that contains $D_z$, $\frac{c_3-c_3'}{k}-1$ elements of $\{D_3,\dotsc,D_{\frac{k-2}{2}}\}\setminus\{D_z\}$, and $H_2'$. It is easy to see that we can embed the petals $u_0u^2_1u^2_2\dotsb u^2_{c_2-1}u_0$ as $H_1$ and $u_0u^3_1u^3_2\dotsb u^3_{c_3-1}u_0$ as $H_2$.
\end{proof}

\subsection{Cycles with one chord}

Unlike the case when $k$ is odd, the obstruction of the triangle in $C_n^{\{0,2\}}$ when embedded in $K_k^*$ only occurs when $k=4$. This is because when $k$ is even, the chord $w_0w_j$ in $C_n^{\{0,j\}}$ can be embedded as a diagonal $v_0v_{\frac{k}{2}}$, instead of as a loop in $K_k^*$ when $k$ is odd. Hence, with the exception of $(k,n,j)=(4,8,2)$, the trivial necessary conditions are also sufficient for embedding $C_n^{\{0,j\}}$ in $K_k^*$.

\begin{theorem}\label{chordedembedevenk}
Let $k$ be even. Then $C_n^{\{0,j\}}$ can be embedded in $K_k^*$ if and only if $k\geq4$, $n\leq\frac{k^2}{2}$, and $(k,n,j)\neq(4,8,2)$. 
\end{theorem}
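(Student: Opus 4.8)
The plan is to treat necessity and sufficiency separately, and within sufficiency to split on whether $j=2$, whether $j\ge 3$ with $n$ strictly below the bound, and the boundary case $n=\frac{k^2}{2}$.

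For necessity I would first dispose of $k=2$ by an edge count: $C_n^{\{0,j\}}$ has $n+1\ge 5$ edges while $K_2^*$ has only three, so $k\ge 4$. The bound $n\le\frac{k^2}{2}$ I would get from a degree-parity argument rather than a raw edge count. Every vertex of $K_k^*$ has odd degree $k+1$, so in the image of an embedding each used vertex whose degree is even must fail to receive at least one incident edge. Since $C_n^{\{0,j\}}$ has exactly two odd-degree vertices ($w_0$ and $w_j$), its image has at most two vertices of odd degree; hence at least $k-2$ of the $k$ used vertices carry an odd deficiency, so the total deficiency is at least $k-2$ and at least $\frac{k}{2}-1$ edges are missing, forcing $n+1\le\binom{k+1}{2}-\left(\frac{k}{2}-1\right)=\frac{k^2}{2}+1$. (If fewer than $k$ vertices are used, the image lies in some $K_t^*$ with $t<k$ and the bound only improves.) Finally, the exclusion of $(4,8,2)$ I would prove by hand: the triangle $w_0w_1w_2$ must occupy three distinct vertices of $K_4^*$, and I would show the remaining length-$6$ path cannot be realized as a trail from $v_2$ to $v_0$, because the loop at the fourth triangle vertex becomes unreachable while leaving four odd-degree vertices and hence no Eulerian trail.

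For sufficiency when $j\ge 3$ and $n\le\frac{k^2}{2}-1$, I would reuse the petal machinery: embed $C_n^{\{0,j\}}$ into the petal graph $P_{1,j,n-j}$ exactly as in the odd case (chord $\mapsto$ loop, the two arcs $\mapsto$ the two petals) and invoke Theorem \ref{petalembedevenk}, whose hypotheses $c_2=j\ge 3$ and $1+j+(n-j)=n+1\le\frac{k^2}{2}$ hold for $k\ge 6$. For $j=2$ I would argue directly: send the triangle $w_0w_1w_2$ to $v_0v_1v_2$ and realize the remaining $(n-2)$-edge path as an Eulerian trail from $v_2$ to $v_0$ in $K_k^*$ with the triangle and a parity-fixing edge set $H$ deleted. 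Here $H$ consists of a matching on $\{v_1,v_3,v_4,\dots,v_{k-1}\}$ (which flips exactly the right $k-2$ parities, leaving only $v_0,v_2$ of odd degree) together with $\frac{k^2}{2}-n$ loops to tune the edge count; one checks that the resulting graph is connected after deleting isolated vertices for every admissible $n$, the single failure being $(4,8,2)$, where no extra loop is available and the loop at $v_1$ is stranded.

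The boundary case $j\ge 3$, $n=\frac{k^2}{2}$ is where the extra unit gained by not wasting a loop on the chord matters, and I expect this to be the main obstacle. Here I would embed the chord as the matching edge $v_0v_{k/2}\in I$ and realize the full $n$-cycle as a closed trail using every edge of $K_k^*-I$, which has exactly $\frac{k^2}{2}$ edges, all of even degree, and is connected for $k\ge 4$. The crux is to split this Eulerian structure into two edge-disjoint $v_0$--$v_{k/2}$ trails of the prescribed lengths $j$ and $n-j$: starting from the two halves of the Hamiltonian cycle $D_1$ (each a $v_0$--$v_{k/2}$ path of length $\frac{k}{2}$) and splicing in the cycles of $D_2,\dots,D_{(k-2)/2}$ and the individual loops of $D_0$, I can tune one trail to any target length $j$ with $3\le j\le\frac{n}{2}$ while keeping its complement connected (for smaller $j$ I start instead from a length-$2$ or length-$3$ $v_0$--$v_{k/2}$ path). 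Guaranteeing this exact-length decomposition together with connectivity of the complement---equivalently, forcing $v_{k/2}$ to occupy cyclic position $j$ along a suitable Eulerian circuit---is the delicate point. The remaining finite set of cases $k=4$ (only $4\le n\le 8$) I would then settle by explicit embeddings, confirming $(4,8,2)$ as the sole exception.
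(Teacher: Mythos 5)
Your necessity argument matches the paper's (edge/parity count plus the hand analysis of $(4,8,2)$), and your reduction of the case $j\geq3$, $n\leq\frac{k^2}{2}-1$, $k\geq6$ to Theorem \ref{petalembedevenk} via $P_{1,j,n-j}$ is sound and is in fact a tidier route than the paper takes for that sub-range. The $j=2$ construction (triangle at $v_0v_1v_2$, delete a parity-fixing matching on the other $k-2$ vertices plus tuning loops) also works in outline, though the connectivity check you wave at still needs the matching and the deleted loops to be chosen explicitly.

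The genuine gap is exactly where you say you expect one: the boundary case $j\geq3$, $n=\frac{k^2}{2}$. There you must decompose $K_k^*-I$ into two edge-disjoint subgraphs, each connected with $v_0$ and $v_{k/2}$ as its only odd-degree vertices, of sizes exactly $j$ and $\frac{k^2}{2}-j$ for \emph{every} $3\leq j\leq\frac{k^2}{4}$, and you never exhibit this decomposition. Your proposed splicing mechanism cannot reach all target lengths as described: adding a whole class $D_i$ changes the length by a multiple of $k$, and a loop of $D_0$ can only be absorbed at a vertex the trail already visits, so a length-$2$ or length-$3$ starting path yields only lengths up to about $7$ before you must jump to $\frac{k}{2}+mk$; the residues in between (e.g.\ $8\leq j<\frac{k}{2}$ for large $k$, and more seriously the whole range $\frac{k}{2}<j\leq\frac{k^2}{4}$ with $j\not\equiv\frac{k}{2}\pmod{k}$) require initial $v_0$--$v_{k/2}$ paths of arbitrary prescribed length and \emph{partial} distance classes (the paper uses the two $\frac{k}{2}$-cycles of $D_2$ for precisely this). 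Moreover, connectivity of the complementary subgraph is not automatic and is where the paper expends most of its effort, with separate verifications for $k=4,6,8$ and $k\geq10$; for small $k$ the complement can genuinely strand a loop or an isolated $D_i$-edge unless the pieces are assigned carefully. Flagging this step as ``the delicate point'' is accurate, but it is the core of the theorem's sufficiency and remains unproved in your write-up; the finite check for $k=4$ is fine to defer, but the infinite family $k\geq6$, $n=\frac{k^2}{2}$ is not.
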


\begin{proof}
For the only if direction, if $k\leq2$, then $n\leq\frac{2^2}{2}=2$, which is impossible according to Definition \ref{chordedcycle}. Hence, $k\geq4$. Since the chorded cycle $C_n^{\{0,j\}}$ has only two vertices with odd degrees, when it is embedded in $K_k^*$, the image will have at most two vertices with odd degrees. Since all the $k$ vertices in $K_k^*$ have odd degrees, after embedding, at least one edge will be missing per vertex from at least $k-2$ vertices. In other words, there are at least $\frac{k-2}{2}$ edges missing, implying that the number of edges in $C_n^{\{0,j\}}$ is at most $\binom{k+1}{2}-\frac{k-2}{2}=\frac{k^2+2}{2}$. Since $C_n^{\{0,j\}}$ has $n+1$ edges, we have $n\leq\frac{k^2}{2}$.

If $(k,n,j)=(4,8,2)$, when $C_8^{\{0,2\}}$ is embedded in $K_4^*$, at least one edge that connects two vertices will be missing as discussed. Without loss of generality, let this edge be $v_1v_3$. Note that the subgraph given by the cycle $w_0w_1w_2w_0$ in $C_8^{\{0,2\}}$ must be embedded as a triangle in $K_4^*-v_1v_3$. Without loss of generality, let $w_0w_1w_2w_0$ be embedded as $v_0v_1v_2v_0$. The remaining edges of $K_4^*$, namely $K_4^*-v_1v_3-v_0v_1v_2v_0$, form a disconnected graph. As a result, it is impossible to embed the subgraph $w_2w_3\dotsb w_7w_0$ of $C_8^{\{0,2\}}$. Therefore, if $C_n^{\{0,j\}}$ can be embedded in $K_k^*$ , then $(k,n,j)\neq(4,8,2)$.

For the if direction, we are going to show that if $k\geq4$, $n\leq\frac{k^2}{2}$, and $(k,n,j)\neq(4,8,2)$, then $C_n^{\{0,j\}}$ can be embedded in $(K_k^*-I)\cup\{v_0v_{\frac{k}{2}}\}$, which contains $\frac{k^2}{2}+1$ edges. In all of the following cases, we embed the chord $w_0w_j$ as $v_0v_{\frac{k}{2}}$ in $K_k^*$.

Without loss of generality, we can assume that $n>\binom{k}{2}-3$. Otherwise, if $4\leq n\leq\binom{k}{2}-3$, we can embed $C_n^{\{0,j\}}$ in $K_{k-1}^*$ by Theorem \ref{OddChordedCycle}, which can be further embedded in $K_k^*$ trivially. Let $h=\frac{k^2}{2}-n$ be the gap between $n$ and $\frac{k^2}{2}$. Based on our assumption, $0\leq h\leq\frac{k^2}{2}-\left(\binom{k}{2}-2\right)=\frac{k}{2}+2$.

Case $1$: $j\leq\frac{k}{2}$. Let $H_0$, $H_1$, and $H_2$ be subgraphs of $K_k^*-I$ such that $H_0=\{v_1v_1,v_2v_2,\dotsc,\linebreak v_hv_h\}$, $H_1=v_0v_1v_2\dotsb v_{j-1}v_{\frac{k}{2}}$, and $H_2=K_k^*-I-H_0-H_1$. Note that $H_0$ is well-defined since $h\leq\frac{k}{2}+2\leq\frac{k}{2}+\frac{k}{2}=k$, and $v_kv_k$ represents the loop $v_0v_0$. With a simple count, we see that $H_0$ has $h$ edges, $H_1$ has $j$ edges, and $H_2$ has $\binom{k+1}{2}-\frac{k}{2}-h-j=\frac{k^2}{2}-h-j=n-j$ edges. It is clear that the only two odd degree vertices in $H_1$ and $H_2$ are $v_0$ and $v_{\frac{k}{2}}$. Furthermore, we claim that $H_2$ is connected: if $k=4$, then $(k,n,j)\neq(4,8,2)$ implies that $h\neq\frac{4^2}{2}-8=0$, so $H_2$ contains the path $v_0v_3v_2$ but not the loop $v_1v_1$; if $k\geq6$, then $H_2$ contains $(D_2\cup\{v_{k-1}v_0\})-\{v_{j-1}v_{\frac{k}{2}}\}$, so $H_2$ is connected.  Therefore, $H_1$ and $H_2$ both contain an Eulerian trail, and we can embed the paths $w_0w_1w_2\dotsb w_j$ and $w_0w_{n-1}w_{n-2}\dotsb w_j$ as $H_1$ and $H_2$ respectively.

Case $2$: $j>\frac{k}{2}$. Let $j'$ and $j''$ be the smallest positive integers such that $j-\frac{k}{2}\mod{j'}{k}$ and $j'\mod{j''}{\frac{k}{2}}$. Let
$$H_0=\begin{cases}
\{v_{j''+1}v_{j''+1},v_{j''+2}v_{j''+2},\dotsc,v_{j''+h}v_{j''+h}\}&\text{if }h\leq\frac{k}{2};\\
\{v_{j''+1}v_{j''+1},v_{j''+2}v_{j''+2},\dotsc,v_{j''+h-\frac{k}{2}}v_{j''+h-\frac{k}{2}}\}\\
\hspace{30pt}\cup\{v_0v_2v_4\dotsb v_{k-2}v_0\}&\text{if }h>\frac{k}{2},\\
\end{cases}$$
$H_1$ contain the path $v_0v_1v_2\dotsb v_\frac{k}{2}$ along with $\frac{j-\frac{k}{2}-j'}{k}$ elements of $\{D_3,D_4,\dotsc,D_{\frac{k-2}{2}}\}$ and the edges
$$\begin{cases}
\{v_1v_1,v_2v_2,\dotsc,v_{j'}v_{j'}\}&\text{if }j'\leq\frac{k}{2};\text{ and}\\
\{v_1v_1,v_2v_2,\dotsc,v_{j''}v_{j''}\}\cup\{v_1v_3v_5\dotsb v_{k-1}v_1\}&\text{if }j'>\frac{k}{2},
\end{cases}$$
and $H_2=K_k^*-I-H_0-H_1$. Note that $v_kv_k$ represents the loop $v_0v_0$. With a simple count, we see that $H_0$ has $h$ edges, $H_1$ has $j$ edges, and $H_2$ has $\binom{k+1}{2}-\frac{k}{2}-h-j=\frac{k^2}{2}-h-j=n-j$ edges. It is clear that the only two odd degree vertices in $H_1$ and $H_2$ are $v_0$ and $v_{\frac{k}{2}}$. It is also clear that $H_1$ is connected. We will now show that $H_2$ is connected.

\begin{itemize}
\item If $k=4$, then $H_2$ is connected since it contains the path $v_0v_3v_2$ and does not contain the loop $v_1v_1$.
\item If $k=6$, then $H_2$ contains the path $v_0v_5v_4v_3$. Hence, the only possible disconnected edges in $H_2$ are $v_1v_1$, $v_2v_2$, and $v_1v_2$. However, $v_1v_1$ and $v_1v_2$ are always contained in $H_1$. Moreover, $H_2$ contains $v_2v_2$ if and only if $(j',h)\in\{(1,0),(4,0)\}$, and in both cases, $H_2$ also contains the cycle $v_0v_2v_4v_0$, so $H_2$ is connected.
\item If $k=8$, then $H_2$ contains the path $v_0v_7v_6v_5v_4$. Hence, the only possible disconnected edges in $H_2$ are $v_1v_1$, $v_2v_2$, $v_3v_3$, $v_1v_2$, $v_2v_3$, and $v_1v_3$.  However, $v_1v_1$, $v_1v_2$, and $v_2v_3$ are always contained in $H_1$. The loop $v_2v_2$ is contained in $H_2$ if and only if $(j',h)\in\{(1,0),(5,0)\}$, and in both cases, $H_2$ also contains the cycle $v_0v_2v_4v_6v_0$. Finally, note that if $j'\leq\frac{k}{2}$, then $H_2$ contains the cycle $v_1v_3v_5v_7v_1$; if $j'>\frac{k}{2}=4$, then $\frac{j-\frac{k}{2}-j'}{k}<\frac{16-4-4}{8}\leq1$, so $H_2$ contains $D_3$. Hence, vertex $v_3$ is always connected to $v_0$ in $H_2$, thus the edges $v_1v_3$ and $v_3v_3$ will never be isolated edges in $H_2$. Therefore, $H_2$ is always connected.
\item If $k\geq10$, then $H_2$ contains $\{v_0v_{k-1}v_{k-2}\dotsb v_{\frac{k}{2}}\}\cup D_i$ for some $i\in\{3,4,\dotsc,\frac{k-2}{2}\}$. To demonstrate this, let us first assume the contrary, i.e., every $D_i$, $i\in\{3,4,\dotsc,\frac{k-2}{2}\}$, is in $H_1$. Hence, $\frac{j-\frac{k}{2}-j'}{k}=\frac{k-2}{2}-2$, which implies $j-j'=\frac{k^2-5k}{2}$. Since $j\leq\frac{n}{2}\leq\frac{1}{2}\cdot\frac{k^2}{2}=\frac{k^2}{4}$ and $j'>0$, we have $\frac{k^2-5k}{2}<\frac{k^2}{4}$, which occurs if and only if $0<k<10$, contradicting that $k\geq10$. Therefore, $H_2$ is connected.
\end{itemize}
Therefore, $H_1$ and $H_2$ both contain an Eulerian trail, and we can embed the paths \linebreak $w_0w_1w_2\dotsb w_j$ and $w_0w_{n-1}w_{n-2}\dotsb w_j$ as $H_1$ and $H_2$ respectively.
\end{proof}

\subsection{Spider graphs with four legs}

\begin{theorem}\label{spiderembedevenk}
Let $k$ be even, and let $n=\ell_1+\ell_2+\ell_3+\ell_4$. Then $S_{\ell_1,\ell_2,\ell_3,\ell_4}$ can be embedded in $K_k^*$ if and only if $k\geq4$, $n\leq\frac{k^2+4}{2}$, and $(k,\ell_1)\neq(4,2)$.
\end{theorem}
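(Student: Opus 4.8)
The plan is to verify the three necessary conditions and then build the embeddings, using Theorem~\ref{petalembedevenk} for the generic case and explicit constructions in $K_k^*$ for the extremal ones.

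For necessity, the three conditions come from separate sources. Since the central vertex $x_0$ has degree $4$, Theorem~\ref{embedding} and Proposition~\ref{Deltanbdeg>1} give $k\geq\lambda(S_{\ell_1,\ell_2,\ell_3,\ell_4})\geq4$. If moreover $\ell_1\geq2$, then every neighbour of $x_0$ has degree $>1$, so Proposition~\ref{Deltanbdeg>1} upgrades this to $\lambda\geq5$ and hence $k\geq5$; this rules out $k=4$ whenever $\ell_1\geq2$, and since $\ell_1\geq3$ already forces $n\geq4\ell_1\geq12>10=\frac{k^2+4}{2}$ at $k=4$, the only surviving exclusion to record is $(k,\ell_1)\neq(4,2)$. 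The bound $n\leq\frac{k^2+4}{2}$ is the crucial parity count: every vertex of $K_k^*$ has odd degree $k+1$, whereas the only odd-degree vertices of the spider are its four leaves. An embedding preserves the total degree carried into each target vertex, so the image $G'$ has at most four vertices of odd degree; consequently at least $k-4$ vertices of $K_k^*$ are even in $G'$, each must lose an odd (in particular positive) number of edges, and so at least $\frac{k-4}{2}$ edges of $K_k^*$ are unused, giving $n\leq\binom{k+1}{2}-\frac{k-4}{2}=\frac{k^2+4}{2}$.

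For sufficiency, the main engine is a folding that mirrors Case~$2$ of Theorem~\ref{spiderembedoddk}. When $\ell_3\geq2$ I would embed $S_{\ell_1,\ell_2,\ell_3,\ell_4}$ edge-bijectively into the petal graph $P_{1,c_2,c_3}$ by pairing legs $1,3$ into a petal of length $\ell_1+\ell_3$ and legs $2,4$ into a petal of length $\ell_2+\ell_4-1$, with the first edge of leg $2$ realised as the loop; the hypothesis $\ell_3\geq2$ makes both petal lengths at least $3$, so Theorem~\ref{petalembedevenk} settles all such spiders with $k\geq6$ and $n\leq\frac{k^2}{2}$. The family $\ell_3=1$, namely $S_{1,1,1,\ell_4}$, admits no such folding (its petal would have length $2$); for it I would instead invoke Theorem~\ref{spiderembedoddk} at the odd modulus $k-1$ to embed in $K_{k-1}^*$, hence in $K_k^*$, whenever $n\leq\binom{k}{2}-1$. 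After these two reductions only a top band of values of $n$ remains: the two values $n\in\{\frac{k^2}{2}+1,\frac{k^2}{2}+2\}$ when $\ell_3\geq2$, and $\binom{k}{2}-1<n\leq\frac{k^2+4}{2}$ when $\ell_3=1$, together with the small modulus $k=4$ (where necessarily $\ell_1=1$).

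I expect this top band to be the main obstacle. There the number of forgone edges $h=\binom{k+1}{2}-n$ satisfies $\frac{k-4}{2}\leq h\leq k$, and one must embed directly into $K_k^*$: place $x_0$ at $v_0$ and route the four legs as pairwise edge-disjoint trails out of $v_0$ of lengths exactly $\ell_1,\ell_2,\ell_3,\ell_4$, using the decomposition of $K_k^*$ into the circulant factors $D_0,\dots,D_{(k-2)/2}$ and the matching $I$ from the start of Section~\ref{sec:keven}. The forgone edges must be selected as a near-perfect matching so that precisely the four leaf-images acquire odd degree while the remainder of $K_k^*$ stays connected; near the top of the band $v_0$ itself is forced to carry even degree close to $k$ rather than $4$, so the longer legs must pass back through $v_0$. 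The genuine difficulty, exactly as in the proofs of Theorems~\ref{petalembedevenk} and \ref{chordedembedevenk}, is realising the \emph{exact} prescribed leg lengths rather than merely arranging parity and connectivity; I anticipate this forces a division into sub-cases according to the sizes of the $\ell_i$ relative to $k$, with $k=4$ and the smallest even $k$ dispatched by a short finite check.
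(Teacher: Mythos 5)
Your necessity argument is sound and matches the paper's: the parity count giving $n\leq\binom{k+1}{2}-\frac{k-4}{2}=\frac{k^2+4}{2}$ is exactly the paper's, and the exclusion $(k,\ell_1)\neq(4,2)$ via Proposition~\ref{Deltanbdeg>1} is also how the paper argues. The sufficiency direction, however, has a genuine gap: your folding of legs $1,3$ and $2,4$ into a petal graph with $1+(\ell_1+\ell_3)+(\ell_2+\ell_4-1)=n$ edges can, by Theorem~\ref{petalembedevenk}, only reach $n\leq\frac{k^2}{2}$, and you explicitly leave the ``top band'' $n\in\{\frac{k^2}{2}+1,\frac{k^2}{2}+2\}$ (and the corresponding band for $S_{1,1,1,\ell_4}$, and all of $k=4$) as an anticipated case analysis rather than a proof. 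But by Proposition~\ref{subgraph} the whole theorem reduces to the single extremal value $n=\frac{k^2+4}{2}$, so the band you defer \emph{is} the substantive content; everything you do prove already follows from it. Your sketch for the band (choose the forgone edges as a near-perfect matching, route four edge-disjoint trails of exact lengths $\ell_i$ through the circulant factors) names the right difficulty but does not resolve it.

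The idea you are missing is the one the paper is built around: when $n=\frac{k^2+4}{2}$, fold the spider \emph{minus two edges} into a petal graph $P_{1,c_2,c_3}$ with $1+c_2+c_3=n-2=\frac{k^2}{2}$ (the paper pairs legs $2,3$ into a petal of length $\ell_2+\ell_3-1$ and legs $1,4$ into one of length $\ell_1+\ell_4-2$), embed that petal graph into $K_k^*-I$ via the construction of Theorem~\ref{petalembedevenk}, and then restore the two deleted edges as two \emph{distinct} edges of the perfect matching $I$, extending two of the leg-images by one step each. The case analysis (the paper's Cases $1.1$--$1.3$) is precisely about guaranteeing that two distinct edges of $I$ are available at the right endpoints; a separate explicit construction handles $\ell_2+\ell_3<4$ using $(K_k^*-I)\cup\{v_0v_{k/2},v_1v_{k/2+1}\}$, and $k=4$ is a finite table. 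A secondary issue: your folding also breaks for small spiders with $\ell_2+\ell_4-1=2$ (a length-$2$ petal is a double edge and Theorem~\ref{petalembedevenk} requires $c_2\geq3$), though those cases could be absorbed by reducing to a smaller $K_j^*$.
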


\begin{proof}
For the only if direction, if $k\leq2$, then the number of edges in $K_k^*$ is at most $\binom{2+1}{2}=3$, which is less than the number of edges in $S_{\ell_1,\ell_2,\ell_3,\ell_4}$, so $k\geq4$ is a necessary condition.

Since the spider graph $S_{\ell_1,\ell_2,\ell_3,\ell_4}$ has only four vertices with odd degrees, when it is embedded in $K_k^*$, the image will have at most four vertices with odd degrees. Since all the $k$ vertices in $K_k^*$ have odd degrees, after embedding, at least one edge will be missing per vertex from at least $k-4$ vertices. In other words, there are at least $\frac{k-4}{2}$ edges missing, implying that the number of edges in $S_{\ell_1,\ell_2,\ell_3,\ell_4}$ is at most $\binom{k+1}{2}-\frac{k-4}{2}=\frac{k^2+4}{2}$, i.e., $n\leq\frac{k^2+4}{2}$.

Finally, embedding $S_{\ell_1,\ell_2,\ell_3,\ell_4}$ in $K_k^*$ when $(k,\ell_1)=(4,2)$ is impossible due to Proposition \ref{Deltanbdeg>1}, since $\Delta(S_{\ell_1,\ell_2,\ell_3,\ell_4})=4$. Therefore, $(k,\ell_1)\neq(4,2)$.

For the if direction, by Proposition \ref{subgraph}, we only need to consider $n=\frac{k^2+4}{2}$. We are going to define the embedding $\phi:V(S_{\ell_1,\ell_2,\ell_3,\ell_4})\to V(K_4^*)$ explicitly.

When $k=4$, since $n=10$ and $\ell_1\neq2$,
$$(\ell_1,\ell_2,\ell_3,\ell_4)\in\{(1,1,1,7),(1,1,2,6),(1,1,3,5),(1,1,4,4),(1,2,2,5),(1,2,3,4),(1,3,3,3)\}.$$
For all these instances, we embed the first leg $x_0x^1_1$ of $S_{\ell_1,\ell_2,\ell_3,\ell_4}$ in $K_4^*$ as the loop $v_0v_0$, i.e., $\phi(x_0)=v_0$ and $\phi(x^1_1)=v_0$. The embedding of other legs are given by the following table.
\begin{center}
\begin{tabu}{|c|[1.5pt]c|c|c|}
\hline
$(\ell_1,\ell_2,\ell_3,\ell_4)$& $\big(\phi(x^2_1),\dotsc,\phi(x^2_{\ell_2})\big)$& $\big(\phi(x^3_1),\dotsc,\phi(x^3_{\ell_3})\big)$& $\big(\phi(x^4_1),\dotsc,\phi(x^4_{\ell_4})\big)$\\
\tabucline[1.5pt]{-}
$(1,1,1,7)$& $(v_1)$& $(v_2)$& $(v_3,v_3,v_2,v_2,v_1,v_1,v_3)$\\
\hline
$(1,1,2,6)$& $(v_1)$& $(v_2,v_2)$& $(v_3,v_3,v_2,v_1,v_1,v_3)$\\
\hline
$(1,1,3,5)$& $(v_1)$& $(v_2,v_2,v_3)$& $(v_3,v_3,v_1,v_1,v_2)$\\
\hline
$(1,1,4,4)$& $(v_1)$& $(v_2,v_2,v_3,v_3)$& $(v_3,v_1,v_1,v_2)$\\
\hline
$(1,2,2,5)$& $(v_1,v_1)$& $(v_2,v_2)$& $(v_3,v_3,v_1,v_2,v_3)$\\
\hline
$(1,2,3,4)$& $(v_1,v_1)$& $(v_2,v_2,v_3)$& $(v_3,v_3,v_1,v_2)$\\
\hline
$(1,3,3,3)$& $(v_1,v_1,v_2)$& $(v_2,v_2,v_3)$& $(v_3,v_3,v_1)$\\
\hline
\end{tabu}
\end{center}

When $k\geq6$, we proceed by considering the following cases.

Case $1$: $\ell_2+\ell_3\geq4$. Let $(i,j)=(2,3)$ if $\ell_2+\ell_3-1\leq\ell_1+\ell_4-2$, and let $(i,j)=(3,2)$ otherwise. Furthermore, let $c_i=\ell_2+\ell_3-1$ and $c_j=\ell_1+\ell_4-2$. We claim that the petal graph $P_{1,c_2,c_3}$ can be embedded in $K_k^*-I$. Since $\ell_2+\ell_3\geq4$, we have $\ell_2+\ell_3-1\geq3$. Also, since $\ell_4\geq\frac{k^2+4}{8}$ and $k\geq6$, we have $\ell_4\geq5$, so $\ell_1+\ell_4-2\geq4>3$. Finally, $\ell_1+\ell_2+\ell_3+\ell_4=\frac{k^2+4}{2}$, which implies $1+c_2+c_3=1+(\ell_2+\ell_3-1)+(\ell_3+\ell_4-2)=\frac{k^2}{2}$. Therefore, there exists an embedding $\psi:V(P_{1,c_2,c_3})\to V(K_k^*-I)$ by the argument provided in the proof of Theorem \ref{petalembedevenk}.

Now, we start embedding $S_{\ell_1,\ell_2,\ell_3,\ell_4}$ in $K_4^*$ as follows. We embed the second leg \linebreak $x_0x^2_1x^2_2\dotsb x^2_{\ell_2}$ of $S_{\ell_1,\ell_2,\ell_3,\ell_4}$ as the path $\psi(u_0)\psi(u^i_1)\psi(u^i_2)\dotsb\psi(u^i_{\ell_2})$ in $K_k^*$. Let $v_a=\psi(u_{\ell_2}^i)$ be a vertex in $K_k^*$. Then we embed the third leg $x_0x^3_1x^3_2\dotsb x^3_{\ell_3}$ as the path $\psi(u_0)\psi(u^i_{c_i-1})\psi(u^i_{c_i-2})\dotsb\linebreak\psi(u^i_{\ell_2})v_{\frac{k}{2}+a}$, where $\frac{k}{2}+a$ is performed under modulo $k$. Note that the edge $\psi(u^i_{\ell_2})v_{\frac{k}{2}+a}$ is in the perfect matching $I$.

Let $v_{b_2}=\psi(u^j_{\ell_4-2})$ and $v_{b_1}=\psi(u^j_{\ell_4-1})$ be vertices in $K_k^*$. Here, if $\ell_4-1=c_j$, then $u^j_{\ell_4-1}=u_0$.

Case $1.1$: $v_{b_2}\notin\{v_a,v_{\frac{k}{2}+a}\}$. In this case, we embed the fourth leg $x_0x^4_1x^4_2\dotsb x^4_{\ell_4}$ as the path $\psi(u_0)\psi(u_0)\psi(u^j_1)\psi(u^j_2)\dotsb\psi(u^j_{\ell_4-2})v_{\frac{k}{2}+b_2}$, where $\frac{k}{2}+b_2$ is performed under modulo $k$. Note that the edge $\psi(u^j_{\ell_4-2})v_{\frac{k}{2}+b_2}$ is in the perfect matching $I$ and is distinct from the edge $\psi(u^i_{\ell_2})v_{\frac{k}{2}+a}$. Finally, we embed the first leg $x_0x^1_1x^1_2\dotsb x^1_{\ell_1}$ as the path $\psi(u_0)\psi(u^j_{c_j-1})\psi(u^j_{c_j-2})\dotsb\linebreak\psi(u^j_{\ell_4-2})$.

Case $1.2$: $v_{b_2}\in\{v_a,v_{\frac{k}{2}+a}\}$ and $v_{b_1}\notin\{v_a,v_{\frac{k}{2}+a}\}$. In this case, we embed the fourth leg $x_0x^4_1x^4_2\dotsb x^4_{\ell_4}$ as the path $\psi(u_0)\psi(u_0)\psi(u^j_1)\psi(u^j_2)\dotsb\psi(u^j_{\ell_4-1})$, and we embed the first leg $x_0x^1_1x^1_2\dotsb x^1_{\ell_1}$ as the path $\psi(u_0)\psi(u^j_{c_j-1})\psi(u^j_{c_j-2})\dotsb\psi(u^j_{\ell_4-1})v_{\frac{k}{2}+b_1}$, where $\frac{k}{2}+b_1$ is performed under modulo $k$. Note that the edge $\psi(u^j_{\ell_4-1})v_{\frac{k}{2}+b_1}$ is in the perfect matching $I$ and is distinct from the edge $\psi(u^i_{\ell_2})v_{\frac{k}{2}+a}$.

Case $1.3$: $v_{b_2},v_{b_1}\in\{v_a,v_{\frac{k}{2}+a}\}$. In this case, we claim that $\ell_1>1$. To demonstrate this, let us first assume the contrary, i.e., $\ell_1=1$. As a result, $c_j=\ell_4-1$ and $v_{b_1}=\psi(u^j_{c_j})=\psi(u_0)$, which is the vertex $v_0$ due to the proof of Theorem \ref{petalembedevenk}. Therefore, we have $v_{b_2},v_{b_1}\in\{v_0,v_{\frac{k}{2}}\}$. However, $v_{b_2}=\psi(u^j_{c_j-1})$ cannot be the vertex $v_0$, or else $\psi(u_0)\psi(u^j_{c_j-1})=v_0v_0=\psi(u_0)\psi(u_0)$, contradicting that $\psi$ is an embedding. The vertex $v_{b_2}$ cannot be the vertex $v_{\frac{k}{2}}$ either, or else $\psi(u_0)\psi(u^j_{c_j-1})=v_0v_{\frac{k}{2}}$ is an edge in the perfect matching $I$, contradicting that $\psi$ is an embedding of $P_{1,c_2,c_3}$ in $K_k^*-I$. Hence, $\ell_1>1$.

Note that $v_{b_2}=v_{b_1}$, or else $\psi(u^j_{\ell_4-2})\psi(u^j_{\ell_4-1})=v_{b_2}v_{b_1}$ is an edge in the perfect matching $I$. Let $v_{b_0}=\psi(u^j_{\ell_4})$. Here, if $\ell_4=c_j$, then $u^j_{\ell_4}=u_0$. Since $v_{b_1}v_{b_0}=\psi(u^j_{\ell_4-1})\psi(u^j_{\ell_4})\neq\psi(u^j_{\ell_4-2})\psi(u^j_{\ell_4-1})=v_{b_2}v_{b_1}$ and $v_{b_1}v_{b_0}=\psi(u^j_{\ell_4-1})\psi(u^j_{\ell_4})$ is not in $I$, we conclude that $v_{b_0}\notin\{v_a,v_{\frac{k}{2}+a}\}$. As a result, we can embed the fourth leg $x_0x^4_1x^4_2\dotsb x^4_{\ell_4}$ as the path \linebreak $\psi(u_0)\psi(u^j_1)\psi(u^j_2)\dotsb\psi(u^j_{\ell_4})$, and we embed the first leg $x_0x^1_1x^1_2\dotsb x^1_{\ell_1}$ as the path \linebreak $\psi(u_0)\psi(u_0)\psi(u^j_{c_j-1})\psi(u^j_{c_j-2})\dotsb\psi(u^j_{\ell_4})v_{\frac{k}{2}+b_0}$, where $\frac{k}{2}+b_0$ is performed under modulo $k$. Note that the edge $\psi(u^j_{\ell_4})v_{\frac{k}{2}+b_0}$ is in the perfect matching $I$ and is distinct from the edge $\psi(u^i_{\ell_2})v_{\frac{k}{2}+a}$.

Case $2$: $\ell_2+\ell_3<4$. This means that $(\ell_2,\ell_3)\in\{(1,1),(1,2)\}$, and consequently $(\ell_1,\ell_2,\ell_3,\ell_4)\in\{(1,1,1,n-3),(1,1,2,n-4)\}$. In both cases, we are going to embed $S_{\ell_1,\ell_2,\ell_3,\ell_4}$ in $K_k^*$ such that the image of the embedding is $(K_k^*-I)\cup\{v_0v_{\frac{k}{2}},v_1v_{\frac{k}{2}+1}\}$: the first leg $x_0x^1_1$ is embedded as the loop $v_0v_0$, and the second leg $x_0x^2_1$ is embedded as the diagonal $v_0v_{\frac{k}{2}}$. If $\ell_3=1$, then the third leg $x_0x^3_1$ is embedded as the edge $v_0v_1$; if $\ell_3=2$, then the third leg $x_0x^3_1x^3_2$ is embedded as the path $v_0v_1v_1$. Note that $v_0$, $v_1$, $v_{\frac{k}{2}}$, and $v_{\frac{k}{2}+1}$ are all the odd degree vertices in $(K_k^*-I)\cup\{v_0v_{\frac{k}{2}},v_1v_{\frac{k}{2}+1}\}$, while $v_1$ and $v_{\frac{k}{2}}$ are all the odd degree vertices in the image graph of the first three legs. Hence, if we remove the image graph of the first three legs from $(K_k^*-I)\cup\{v_0v_{\frac{k}{2}},v_1v_{\frac{k}{2}+1}\}$, we have an Eulerian graph with $\ell_4$ edges and exactly two odd degree vertices, namely $v_0$ and $v_{\frac{k}{2}+1}$. Therefore, we can embed the fourth leg $x_0x^4_1x^4_2\dotsb x^4_{\ell_4}$ as the remaining graph of $(K_k^*-I)\cup\{v_0v_{\frac{k}{2}},v_1v_{\frac{k}{2}+1}\}$.
\end{proof}

\section{Edge-distinguishing chromatic numbers}\label{sec:edcn}

\subsection{Petal graphs}

\begin{theorem}\label{petalEDCN}
Let $c_2,c_3\in\N$ such that $3\leq c_2\leq c_3$. Let $e=1+c_2+c_3$ be the number of edges of $P_{1,c_2,c_3}$. The edge-distinguishing chromatic number of $P_{1,c_2,c_3}$ is given by
$$\lambda(P_{1,c_2,c_3})=\begin{cases}
5& \text{if $e\leq10$;}\\
\left\lceil\frac{-1+\sqrt{8e+1}}{2}\right\rceil& \text{if $e\geq11$ and $\left\lceil\frac{-1+\sqrt{8e+1}}{2}\right\rceil$ is odd; and}\\
\left\lceil\sqrt{2e}\right\rceil& \text{otherwise.}
\end{cases}$$
\end{theorem}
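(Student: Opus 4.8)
The plan is to read $\lambda(P_{1,c_2,c_3})$ off directly from the embedding characterizations already in hand. By Theorem \ref{embedding}, $\lambda(P_{1,c_2,c_3})$ is the least $k$ for which $P_{1,c_2,c_3}$ embeds in $K_k^*$. Combining Theorem \ref{petalembedoddk} and Theorem \ref{petalembedevenk}, and using the standing hypothesis $c_2\ge3$, an embedding into $K_k^*$ exists precisely when either $k$ is odd with $e\le\binom{k+1}{2}$, or $k$ is even with $k\ge6$ and $e\le\frac{k^2}{2}$. Thus the entire problem collapses to minimizing $k$ over this two-branch feasibility condition, and the displayed three-case formula is merely a closed form for that minimum.

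First I would record the two natural thresholds. Let $a=\left\lceil\frac{-1+\sqrt{8e+1}}{2}\right\rceil$ be the least integer $k$ with $\binom{k+1}{2}\ge e$ (i.e.\ $a$ is obtained from the positive root of $\binom{k+1}{2}=e$), and let $b=\left\lceil\sqrt{2e}\right\rceil$ be the least integer $k$ with $\frac{k^2}{2}\ge e$. The key structural observation is that $a\le b\le a+1$: since $\frac{k^2}{2}\le\binom{k+1}{2}$ for all $k$, any $k$ meeting the even threshold meets the odd one, forcing $a\le b$; and since $\frac{(a+1)^2}{2}=\binom{a+1}{2}+\frac{a+1}{2}\ge\binom{a+1}{2}\ge e$, the value $a+1$ already meets the even threshold, forcing $b\le a+1$. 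Pinning $b$ to one of the two values $a$ or $a+1$ is what makes the parity bookkeeping tractable. I would also record the universal lower bound $\lambda\ge5$: because $c_2\ge3$ gives $e\ge7$, no odd $k\le3$ satisfies $e\le\binom{k+1}{2}$, while the constraint $k\ge6$ rules out every even $k\le4$, so no $K_k^*$ with $k\le4$ admits an embedding.

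With these in place I would split into the stated regimes. For $e\ge11$ one first checks $a\ge5$ (as $\binom{5}{2}=10<11$). If $a$ is odd, every $k<a$ fails both conditions, since $\frac{k^2}{2}<\binom{k+1}{2}<e$ there, while the odd value $k=a$ succeeds via Theorem \ref{petalembedoddk} because $e\le\binom{a+1}{2}$; hence $\lambda=a$, the second branch. If instead $a$ is even, then $a\ge6$, and I claim $\lambda=b$. When $b=a$ the even embedding of Theorem \ref{petalembedevenk} works at the admissible even value $k=a$; when $b=a+1$ the value $k=a$ fails (it is even with $\frac{a^2}{2}<e$, and the odd branch needs $a$ odd), but the odd value $k=a+1$ succeeds since $e\le\binom{a+1}{2}\le\binom{a+2}{2}$. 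As no smaller $k$ is feasible in either subcase, $\lambda=b$, the third branch.

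The remaining point is the boundary $7\le e\le10$, which is exactly where the formula must depart from the naive value $b$: here $a=4$ is even, but $K_4^*$ is barred for even embeddings ($k\ge6$) and $k=4$ is impossible for odd ones, so the least feasible $k$ jumps to $5$ (the odd embedding at $k=5$ works since $e\le15$). I expect this small-case discrepancy — reconciling the clean thresholds $a$ and $b$ with the hard floor $k\ge6$ on even embeddings and the parity mismatch at $k=4$ — to be the only genuinely delicate part of the argument; everything else is the monotone minimization sketched above. I would confirm $\lambda=5$ for each $e\in\{7,8,9,10\}$ by the direct feasibility check to close out the first branch.
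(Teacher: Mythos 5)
Your proposal is correct and follows essentially the same route as the paper: it reduces the computation to the two embedding characterizations (Theorems \ref{petalembedoddk} and \ref{petalembedevenk}) via Theorem \ref{embedding}, introduces the same two thresholds $\left\lceil\frac{-1+\sqrt{8e+1}}{2}\right\rceil$ and $\left\lceil\sqrt{2e}\right\rceil$, and resolves the minimization by the same parity case analysis, with the boundary regime $e\leq10$ handled separately. The only cosmetic differences are that you justify $a\le b\le a+1$ explicitly where the paper merely asserts the chain of inequalities, and you derive the lower bound $\lambda\ge5$ from the embedding conditions rather than from the maximum degree of $P_{1,c_2,c_3}$; both are valid.
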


\begin{proof}
Since the maximum degree of $P_{1,c_2,c_3}$ is $6$, we must have $k\geq5$ in order to embed $P_{1,c_2,c_3}$ in $K_k^*$. On the other hand, by Theorem \ref{petalembedoddk}, $P_{1,c_2,c_3}$ can be embedded in $K_5^*$ if $e\leq10$. Hence, $\lambda(P_{1,c_2,c_3})=5$ if $e\leq10$.

If $e\geq11$, note that
$$5\leq\left\lceil\frac{-1+\sqrt{8e+1}}{2}\right\rceil\leq\left\lceil\sqrt{2e}\right\rceil\leq\left\lceil\frac{-1+\sqrt{8e+1}}{2}\right\rceil+1.$$
Also note that the minimum positive integer $k$ that satisfies $\binom{k+1}{2}\geq e$ is $\left\lceil\frac{-1+\sqrt{8e+1}}{2}\right\rceil$, and the minimum positive integer $k$ that satisfies $\frac{k^2}{2}\geq e$ is $\left\lceil\sqrt{2e}\right\rceil$.

If $\left\lceil\frac{-1+\sqrt{8e+1}}{2}\right\rceil$ is odd, then $\left\lceil\frac{-1+\sqrt{8e+1}}{2}\right\rceil-1$ is even but strictly less than $\left\lceil\sqrt{2e}\right\rceil$, so $\left\lceil\frac{-1+\sqrt{8e+1}}{2}\right\rceil-1$ does not satisfy $\frac{k^2}{2}\geq e$. Hence, by Theorems \ref{petalembedoddk} and \ref{petalembedevenk}, $k=\left\lceil\frac{-1+\sqrt{8e+1}}{2}\right\rceil$ is the minimum positive integer such that $P_{1,c_2,c_3}$ can be embedded in $K_k^*$.

If $\left\lceil\frac{-1+\sqrt{8e+1}}{2}\right\rceil$ and $\left\lceil\sqrt{2e}\right\rceil$ are both even, then $\left\lceil\sqrt{2e}\right\rceil=\left\lceil\frac{-1+\sqrt{8e+1}}{2}\right\rceil$, so $\left\lceil\sqrt{2e}\right\rceil-1$ does not satisfy $\binom{k+1}{2}\geq e$. Hence, by Theorems \ref{petalembedoddk} and \ref{petalembedevenk}, $k=\left\lceil\sqrt{2e}\right\rceil$ is the minimum positive integer such that $P_{1,c_2,c_3}$ can be embedded in $K_k^*$.

If $\left\lceil\frac{-1+\sqrt{8e+1}}{2}\right\rceil$ is even but $\left\lceil\sqrt{2e}\right\rceil$ is odd, then $\left\lceil\sqrt{2e}\right\rceil=\left\lceil\frac{-1+\sqrt{8e+1}}{2}\right\rceil+1$ is odd and satisfies $\binom{k+1}{2}\geq e$, but $\left\lceil\frac{-1+\sqrt{8e+1}}{2}\right\rceil$ does not satisfy $\frac{k^2}{2}\geq e$. Hence, by Theorems \ref{petalembedoddk} and \ref{petalembedevenk}, $k=\left\lceil\sqrt{2e}\right\rceil$ is the minimum positive integer such that $P_{1,c_2,c_3}$ can be embedded in $K_k^*$.

Finally, we finish by applying Theorem \ref{embedding}.
\end{proof}

\subsection{Cycles with one chord}

\begin{theorem}\label{chordedcycleEDCN}
Let $j,n\in\N$ such that $2\leq j\leq\frac{n}{2}$. Let $e=n+1$ be the number of edges of $C_n^{\{0,j\}}$. The edge-distinguishing chromatic number of $C_n^{\{0,j\}}$ is given by
$$\lambda(C_n^{\{0,2\}})=\begin{cases}
4& \text{if $e\leq8$;}\\
5& \text{if $e=9$;}\\
\left\lceil\frac{-1+\sqrt{8e+17}}{2}\right\rceil& \text{if $e\geq10$ and $\left\lceil\frac{-1+\sqrt{8e+17}}{2}\right\rceil$ is odd; and}\\
\left\lceil\sqrt{2e-2}\right\rceil& \text{otherwise,}
\end{cases}$$
and if $j\geq3$, then
$$\lambda(C_n^{\{0,j\}})=\begin{cases}
4& \text{if $e\leq6$;}\\
\left\lceil\frac{-1+\sqrt{8e+1}}{2}\right\rceil& \text{if $e\geq7$ and $\left\lceil\frac{-1+\sqrt{8e+1}}{2}\right\rceil$ is odd; and}\\
\left\lceil\sqrt{2e-2}\right\rceil& \text{otherwise.}
\end{cases}$$
\end{theorem}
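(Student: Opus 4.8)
The plan is to reduce the computation of $\lambda(C_n^{\{0,j\}})$ to the embedding theorems already established, namely Theorem \ref{OddChordedCycle} for odd $k$ and Theorem \ref{chordedembedevenk} for even $k$, combined with Theorem \ref{embedding}. In light of Theorem \ref{embedding}, $\lambda(C_n^{\{0,j\}})$ is simply the minimum $k$ for which $C_n^{\{0,j\}}$ embeds in $K_k^*$, so the entire proof amounts to determining, for each of the two cases ($j=2$ and $j\geq3$), the smallest $k$ admitting an embedding and then verifying it matches the stated closed-form expression. I would handle the two cases in parallel, mirroring the structure of the proof of Theorem \ref{petalEDCN}.

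First I would record the two threshold facts analogous to those in Theorem \ref{petalEDCN}: the minimum odd $k$ is governed by a binomial inequality while the minimum even $k$ is governed by a quadratic inequality. For $j\geq3$, Theorem \ref{OddChordedCycle} says odd $k$ works iff $n\leq\binom{k+1}{2}-1$, i.e.\ $e=n+1\leq\binom{k+1}{2}$, so the minimum odd $k$ is $\lceil\frac{-1+\sqrt{8e+1}}{2}\rceil$ when that ceiling is odd; Theorem \ref{chordedembedevenk} says even $k\geq4$ works iff $n\leq\frac{k^2}{2}$, i.e.\ $e\leq\frac{k^2}{2}+1$, or equivalently $\frac{k^2}{2}\geq e-1$, giving minimum even $k$ equal to $\lceil\sqrt{2e-2}\rceil$. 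For $j=2$ the odd condition tightens to $n\leq\binom{k+1}{2}-3$, i.e.\ $e\leq\binom{k+1}{2}-2$, so the relevant binomial inequality is $\binom{k+1}{2}\geq e+2$, yielding the shifted expression $\lceil\frac{-1+\sqrt{8e+17}}{2}\rceil$; the even condition is the same $n\leq\frac{k^2}{2}$ except for the single exclusion $(k,n,j)=(4,8,2)$, which is exactly what forces the $e=9$ special case to jump to $5$.

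Next I would do the interleaving argument that picks the true minimum between the best odd candidate and the best even candidate, exactly as in the proof of Theorem \ref{petalEDCN}. The key inequality to establish is that the odd threshold and even threshold differ by at most one, so that one can compare them by parity: if the odd candidate $\lceil\frac{-1+\sqrt{8e+17}}{2}\rceil$ (or $\lceil\frac{-1+\sqrt{8e+1}}{2}\rceil$ for $j\geq3$) is odd, it is the minimum, and otherwise the even candidate $\lceil\sqrt{2e-2}\rceil$ is the minimum. I would verify the chain of inequalities relating the two ceilings (the analogue of the displayed $5\leq\lceil\cdots\rceil\leq\lceil\sqrt{2e}\rceil\leq\lceil\cdots\rceil+1$ in Theorem \ref{petalEDCN}) and then run through the three parity subcases (odd candidate odd; both even; odd candidate even but even candidate odd) to confirm which embedding theorem certifies minimality. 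The small-$e$ cases ($e\leq 8$ or $e=9$ for $j=2$, and $e\leq 6$ for $j\geq3$) would be dispatched separately by hand, using $\Delta(C_n^{\{0,j\}})=3$ and Proposition \ref{Deltanbdeg>1} for the lower bound $\lambda\geq4$ together with an explicit small embedding for the upper bound.

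The main obstacle I anticipate is the bookkeeping around the $j=2$ exceptional triple $(4,8,2)$ and the resulting discontinuity: for $j=2$, $e=9$ corresponds to $n=8$, and the value $k=4$ that the quadratic inequality $\frac{k^2}{2}\geq e-1=8$ would otherwise permit is precisely the excluded case, so the minimum is forced up to $k=5$. I must check carefully that no odd $k$ rescues $e=9$ either — indeed the tightened odd bound $\binom{k+1}{2}\geq e+2=11$ first holds at $k=5$ (since $\binom{4}{2}=6<11\leq\binom{6}{2}=15$, and $k=3$ gives only $\binom{4}{2}=6$) — so $5$ is genuinely the answer there. The remaining delicate point is confirming that the shifted constant $+17$ (rather than $+1$) in the $j=2$ formula correctly encodes the $-2$ shift in the binomial inequality across all residues; this is a routine but error-prone algebraic verification that $\binom{k+1}{2}\geq e+2$ is equivalent to $k\geq\frac{-1+\sqrt{8e+17}}{2}$, which I would confirm by completing the square on $k^2+k-(2e+4)\geq0$.
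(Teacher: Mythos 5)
Your proposal is correct and follows essentially the same route as the paper: reduce to the embedding theorems via Theorem \ref{embedding}, translate the odd-$k$ binomial threshold and even-$k$ quadratic threshold into the two ceiling expressions, run the same three parity subcases using the fact that the two candidates differ by at most one, and handle $e\leq 8$ and the $(4,8,2)$ exclusion (hence the $e=9$ jump to $5$) separately. The only cosmetic difference is that you justify the lower bound $\lambda\geq 4$ via Proposition \ref{Deltanbdeg>1} where the paper simply asserts that no chorded cycle embeds in $K_3^*$; both are fine.
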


\begin{proof}
It is obvious that no chorded cycle can be embedded in $K_3^*$, so $\lambda(C_n^{\{0,j\}})\geq4$ for all $j,n\in\N$ such that $2\leq j\leq\frac{n}{2}$.

When $j=2$, by Theorem \ref{chordedembedevenk}, $C_n^{\{0,2\}}$ can be embedded in $K_4^*$ if and only if $e\leq8$. Hence, $\lambda(C_n^{\{0,2\}})=4$ if $e\leq8$. Also, by Theorem \ref{OddChordedCycle}, $C_n^{\{0,2\}}$ can be embedded in $K_5^*$ when $e=9$. Hence, $\lambda(C_n^{\{0,2\}})=5$ if $e=9$.

If $e\geq10$, note that
$$5\leq\left\lceil\frac{-1+\sqrt{8e+17}}{2}\right\rceil\leq\left\lceil\sqrt{2e-2}\right\rceil\leq\left\lceil\frac{-1+\sqrt{8e+17}}{2}\right\rceil+1.$$
Also note that the minimum positive integer $k$ that satisfies $\binom{k+1}{2}-2\geq e$ is $\left\lceil\frac{-1+\sqrt{8e+17}}{2}\right\rceil$, and the minimum positive integer $k$ that satisfies $\frac{k^2}{2}+1\geq e$ is $\left\lceil\sqrt{2e-2}\right\rceil$.

If $\left\lceil\frac{-1+\sqrt{8e+17}}{2}\right\rceil$ is odd, then $\left\lceil\frac{-1+\sqrt{8e+17}}{2}\right\rceil-1$ is even but strictly less than $\left\lceil\sqrt{2e-2}\right\rceil$, so $\left\lceil\frac{-1+\sqrt{8e+17}}{2}\right\rceil-1$ does not satisfy $\frac{k^2}{2}+1\geq e$. Hence, by Theorems \ref{OddChordedCycle} and \ref{chordedembedevenk}, $k=\left\lceil\frac{-1+\sqrt{8e+17}}{2}\right\rceil$ is the minimum positive integer such that $C_n^{\{0,2\}}$ can be embedded in $K_k^*$.

If $\left\lceil\frac{-1+\sqrt{8e+17}}{2}\right\rceil$ and $\left\lceil\sqrt{2e-2}\right\rceil$ are both even, then $\left\lceil\sqrt{2e-2}\right\rceil=\left\lceil\frac{-1+\sqrt{8e+17}}{2}\right\rceil$, so \linebreak $\left\lceil\sqrt{2e-2}\right\rceil-1$ does not satisfy $\binom{k+1}{2}-2\geq e$. Hence, by Theorems \ref{OddChordedCycle} and \ref{chordedembedevenk}, $k=\left\lceil\sqrt{2e-2}\right\rceil$ is the minimum positive integer such that $C_n^{\{0,2\}}$ can be embedded in $K_k^*$.

If $\left\lceil\frac{-1+\sqrt{8e+17}}{2}\right\rceil$ is even but $\left\lceil\sqrt{2e-2}\right\rceil$ is odd, then $\left\lceil\sqrt{2e-2}\right\rceil=\left\lceil\frac{-1+\sqrt{8e+17}}{2}\right\rceil+1$ is odd and satisfies $\binom{k+1}{2}-2\geq e$, but $\left\lceil\frac{-1+\sqrt{8e+17}}{2}\right\rceil$ does not satisfy $\frac{k^2}{2}+1\geq e$. Hence, by Theorems \ref{OddChordedCycle} and \ref{chordedembedevenk}, $k=\left\lceil\sqrt{2e-2}\right\rceil$ is the minimum positive integer such that $C_n^{\{0,2\}}$ can be embedded in $K_k^*$.

Our proof of the formula for $\lambda(C_n^{\{0,2\}})$ is finished by applying Theorem \ref{embedding}.

When $j\geq3$, by Theorem \ref{chordedembedevenk}, $C_n^{\{0,j\}}$ can be embedded in $K_4^*$ if $e\leq6$. Hence, $\lambda(C_n^{\{0,j\}})=4$ if $e\leq6$. If $e\geq6$, note that
$$4\leq\left\lceil\frac{-1+\sqrt{8e+1}}{2}\right\rceil\leq\left\lceil\sqrt{2e-2}\right\rceil\leq\left\lceil\frac{-1+\sqrt{8e+1}}{2}\right\rceil+1.$$
Also note that the minimum positive integer $k$ that satisfies $\binom{k+1}{2}\geq e$ is $\left\lceil\frac{-1+\sqrt{8e+1}}{2}\right\rceil$, and the minimum positive integer $k$ that satisfies $\frac{k^2}{2}+1\geq e$ is $\left\lceil\sqrt{2e-2}\right\rceil$. The rest of the proof is analogous to the case when $j=2$.
\end{proof}

\subsection{Spider graphs with four legs}

\begin{theorem}\label{spiderEDCN}
Let $\ell_1,\ell_2,\ell_3,\ell_4\in\N$ such that $\ell_1\leq\ell_2\leq\ell_3\leq\ell_4$. Let $e=\ell_1+\ell_2+\ell_3+\ell_4$ be the number of edges of $S_{\ell_1,\ell_2,\ell_3,\ell_4}$. The edge-distinguishing chromatic number of $S_{\ell_1,\ell_2,\ell_3,\ell_4}$ is given by
$$\lambda(S_{1,1,1,\ell_4})=\begin{cases}
4& \text{if $e\leq10$;}\\
\left\lceil\frac{-1+\sqrt{8e+9}}{2}\right\rceil& \text{if $e\geq11$ and $\left\lceil\frac{-1+\sqrt{8e+9}}{2}\right\rceil$ is odd; and}\\
\left\lceil\sqrt{2e-4}\right\rceil& \text{otherwise,}
\end{cases}$$
and if $\ell_3\geq2$, then
$$\lambda(S_{\ell_1,\ell_2,\ell_3,\ell_4})=\begin{cases}
4& \text{if $e\leq10$ and $\ell_1=1$;}\\
5& \text{if $e\leq10$ and $\ell_1=2$;}\\
\left\lceil\frac{-1+\sqrt{8e+1}}{2}\right\rceil& \text{if $e\geq11$ and $\left\lceil\frac{-1+\sqrt{8e+1}}{2}\right\rceil$ is odd; and}\\
\left\lceil\sqrt{2e-4}\right\rceil& \text{otherwise.}
\end{cases}$$
\end{theorem}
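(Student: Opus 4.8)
The plan is to read off $\lambda(S_{\ell_1,\ell_2,\ell_3,\ell_4})$ from the embedding criteria of Theorems~\ref{spiderembedoddk} and~\ref{spiderembedevenk} via Theorem~\ref{embedding}, following the same template used to deduce Theorems~\ref{petalEDCN} and~\ref{chordedcycleEDCN} from their embedding theorems. The first step is the lower bound. The central vertex $x_0$ has degree $4$, so $\Delta=4$ and Proposition~\ref{Deltanbdeg>1} gives $\lambda\geq 4$ in all cases; when $\ell_1\geq 2$ every neighbor of $x_0$ has degree at least $2$, so the second half of Proposition~\ref{Deltanbdeg>1} sharpens this to $\lambda\geq 5$. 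For the family $S_{1,1,1,\ell_4}$ one always has $\ell_1=1$, hence only $\lambda\geq 4$.

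Next I would dispose of the small range $e\leq 10$. Here $n\leq 10=\frac{4^2+4}{2}$, so whenever $\ell_1=1$ (which includes every $S_{1,1,1,\ell_4}$) Theorem~\ref{spiderembedevenk} embeds the spider in $K_4^*$, since the exception $(k,\ell_1)=(4,2)$ does not apply; this matches the lower bound and gives $\lambda=4$. If instead $\ell_3\geq 2$ and $\ell_1=2$, then $K_4^*$ is excluded by Proposition~\ref{Deltanbdeg>1}, but $e\geq 8\geq 7$ and $n\leq 15=\binom{6}{2}$, so Theorem~\ref{spiderembedoddk}$(b)$ embeds the spider in $K_5^*$ and $\lambda=5$; note that $\ell_1\geq 3$ is impossible when $e\leq 10$ and $\ell_3\geq 2$, so these subcases are exhaustive.

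The heart of the argument is the range $e\geq 11$, where $n=e\geq 7$ so Theorem~\ref{spiderembedoddk} applies. Let $A$ be the least integer $k$ with $\binom{k+1}{2}-1\geq e$ (for the $\ell_3=1$ family) or with $\binom{k+1}{2}\geq e$ (for $\ell_3\geq 2$), and let $B=\lceil\sqrt{2e-4}\,\rceil$ be the least integer $k$ with $\frac{k^2+4}{2}\geq e$; these are exactly the ceiling expressions in the statement. Because both quadratic thresholds increase with $k$, Theorem~\ref{spiderembedoddk} says an odd $k$ admits an embedding iff $k\geq A$, while Theorem~\ref{spiderembedevenk} says an even $k$ does iff $k\geq B$. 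I would record that $B\geq 5$ when $e\geq 11$, so the smallest admissible even $k$ is at least $6$, which renders the exception $(k,\ell_1)=(4,2)$ irrelevant throughout this range. The decisive elementary estimate is the chain $A\leq B\leq A+1$: comparing $\frac{k^2+4}{2}$ with $\binom{k+1}{2}$ (resp.\ $\binom{k+1}{2}-1$) gives a difference $\frac{4-k}{2}$ (resp.\ $\frac{6-k}{2}$) that is linear in $k$ and of fixed sign once $k$ exceeds a small constant, so the even threshold is met at $k=A+1$ (whence $B\leq A+1$) and the odd threshold is met at $k=B$ (whence $A\leq B$), with the finitely many small values of $e$ checked directly.

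With the chain established, the conclusion is the same three-way parity split as for the petal graphs. If $A$ is odd, then $A-1<B$ rules out every even $k<A$ while minimality of $A$ rules out every smaller odd $k$, so $\lambda=A$. If $A$ and $B$ are both even, then $B=A$ and $\lambda=B$. If $A$ is even while $B$ is odd, then $B=A+1$ is the least admissible $k$, so $\lambda=B$. In every instance $\lambda=A$ when $A$ is odd and $\lambda=B=\lceil\sqrt{2e-4}\,\rceil$ otherwise, which is precisely the stated formula, and a final appeal to Theorem~\ref{embedding} converts this minimal $k$ into the edge-distinguishing chromatic number. I expect the only genuinely delicate point to be the bookkeeping that keeps the two side conditions, namely $n\geq 7$ for the odd embeddings and $(k,\ell_1)\neq(4,2)$ for the even ones, from interfering with the parity argument; both are benign once $e\geq 11$, so the real obstacle is simply verifying the chain $A\leq B\leq A+1$ together with the handful of boundary values cleanly.
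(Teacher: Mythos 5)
Your proposal is correct and follows essentially the same route as the paper: lower bounds from Proposition~\ref{Deltanbdeg>1}, the small cases $e\leq 10$ from Theorems~\ref{spiderembedoddk} and~\ref{spiderembedevenk} directly, and for $e\geq 11$ the chain $A\leq B\leq A+1$ between the odd and even thresholds followed by the three-way parity split and Theorem~\ref{embedding}. The only cosmetic difference is that you invoke Proposition~\ref{Deltanbdeg>1} directly to exclude $K_4^*$ when $\ell_1=2$, whereas the paper cites the "only if" direction of Theorem~\ref{spiderembedevenk} (which rests on the same proposition).
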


\begin{proof}
Since the maximum degree of $S_{\ell_1,\ell_2,\ell_3,\ell_4}$ is $4$, we must have $\lambda(S_{\ell_1,\ell_2,\ell_3,\ell_4})\geq4$ by Proposition \ref{Deltanbdeg>1}. In other words, $k\geq4$ is necessary to embed $S_{\ell_1,\ell_2,\ell_3,\ell_4}$ in $K_k^*$.

When $\ell_3=1$, by Theorem \ref{spiderembedevenk}, $S_{1,1,1,\ell_4}$ can be embedded in $K_4^*$ if and only if $e\leq10$. Hence, $\lambda(S_{1,1,1,\ell_4})=4$ if $e\leq10$.

If $e\geq11$, note that
$$5\leq\left\lceil\frac{-1+\sqrt{8e+9}}{2}\right\rceil\leq\left\lceil\sqrt{2e-4}\right\rceil\leq\left\lceil\frac{-1+\sqrt{8e+9}}{2}\right\rceil+1.$$
Also note that the minimum positive integer $k$ that satisfies $\binom{k+1}{2}-1\geq e$ is $\left\lceil\frac{-1+\sqrt{8e+9}}{2}\right\rceil$, and the minimum positive integer $k$ that satisfies $\frac{k^2}{2}+2\geq e$ is $\left\lceil\sqrt{2e-4}\right\rceil$.

If $\left\lceil\frac{-1+\sqrt{8e+9}}{2}\right\rceil$ is odd, then $\left\lceil\frac{-1+\sqrt{8e+9}}{2}\right\rceil-1$ is even but strictly less than $\left\lceil\sqrt{2e-4}\right\rceil$, so $\left\lceil\frac{-1+\sqrt{8e+9}}{2}\right\rceil-1$ does not satisfy $\frac{k^2}{2}+2\geq e$. Hence, by Theorems \ref{spiderembedoddk} and \ref{spiderembedevenk}, $k=\left\lceil\frac{-1+\sqrt{8e+9}}{2}\right\rceil$ is the minimum positive integer such that $S_{1,1,1,\ell_4}$ can be embedded in $K_k^*$.

If $\left\lceil\frac{-1+\sqrt{8e+9}}{2}\right\rceil$ and $\left\lceil\sqrt{2e-4}\right\rceil$ are both even, then $\left\lceil\sqrt{2e-4}\right\rceil=\left\lceil\frac{-1+\sqrt{8e+9}}{2}\right\rceil$, so \linebreak $\left\lceil\sqrt{2e-4}\right\rceil-1$ does not satisfy $\binom{k+1}{2}-1\geq e$. Hence, by Theorems \ref{spiderembedoddk} and \ref{spiderembedevenk}, $k=\left\lceil\sqrt{2e-4}\right\rceil$ is the minimum positive integer such that $S_{1,1,1,\ell_4}$ can be embedded in $K_k^*$.

If $\left\lceil\frac{-1+\sqrt{8e+9}}{2}\right\rceil$ is even but $\left\lceil\sqrt{2e-4}\right\rceil$ is odd, then $\left\lceil\sqrt{2e-4}\right\rceil=\left\lceil\frac{-1+\sqrt{8e+9}}{2}\right\rceil+1$ is odd and satisfies $\binom{k+1}{2}-1\geq e$, but $\left\lceil\frac{-1+\sqrt{8e+9}}{2}\right\rceil$ does not satisfy $\frac{k^2}{2}+2\geq e$. Hence, by Theorems \ref{spiderembedoddk} and \ref{spiderembedevenk}, $k=\left\lceil\sqrt{2e-4}\right\rceil$ is the minimum positive integer such that $S_{1,1,1,\ell_4}$ can be embedded in $K_k^*$.

Our proof of the formula for $\lambda(S_{1,1,1,\ell_4})$ is finished by applying Theorem \ref{embedding}.

When $\ell_3\geq2$, by Theorem \ref{spiderembedevenk}, $S_{\ell_1,\ell_2,\ell_3,\ell_4}$ can be embedded in $K_4^*$ if and if $e\leq10$ and $\ell_1=1$. Hence, $\lambda(S_{\ell_1,\ell_2,\ell_3,\ell_4})=4$ if $e\leq10$ and $\ell_1=1$. If $\ell_1=2$, then $e\geq8\geq7$, and by Theorem \ref{spiderembedoddk}, $S_{\ell_1,\ell_2,\ell_3,\ell_4}$ can be embedded in $K_5^*$ if $e\leq10$. Hence, $\lambda(S_{\ell_1,\ell_2,\ell_3,\ell_4})=5$ if $e\leq10$ and $\ell_1=2$.

If $e\geq11$, note that
$$5\leq\left\lceil\frac{-1+\sqrt{8e+1}}{2}\right\rceil\leq\left\lceil\sqrt{2e-4}\right\rceil\leq\left\lceil\frac{-1+\sqrt{8e+1}}{2}\right\rceil+1.$$
Also note that the minimum positive integer $k$ that satisfies $\binom{k+1}{2}\geq e$ is $\left\lceil\frac{-1+\sqrt{8e+1}}{2}\right\rceil$, and the minimum positive integer $k$ that satisfies $\frac{k^2}{2}+2\geq e$ is $\left\lceil\sqrt{2e-4}\right\rceil$. The rest of the proof is analogous to the case when $\ell_3=1$.
\end{proof}

\section{Concluding remarks and future work}

When the authors determined the EDCN of spider graphs with three legs \cite{fw}, the focus was only on one graph, making the approach restrictive. In this paper, we consider a variety of graphs; in particular, petal graphs serve as an intermediate step when chorded cycles and spider graphs are embedded in $K_k^*$, which makes the proof easier to navigate. Moreover, this new approach allows us to solve all suggested problems listed in the aforementioned paper except the conjecture concerning caterpillar trees.

Although the embedding of caterpillar trees in $K_k^*$ still eludes us when $k$ is even, we are able to show that the trivial necessary conditions for embedding certain caterpillar trees in $K_k^*$ are also sufficient when $k$ is odd. Hence, one major future work direction is to continue our study of caterpillar trees.

Here is a list of other potential future projects on EDCN.
\begin{enumerate}
\item General petal graphs with three or four petals;
\item Spider graphs with more than $4$ legs;
\item Ladder graphs.
\end{enumerate}

\end{document}